\numberwithin{equation}{section}
\newtheorem{Theorem}{Theorem}[section]
\newtheorem{Lemma}[Theorem]{Lemma}
 { \theoremstyle{definition}
\newtheorem{Example}[Theorem]{Example}}
\begin{document}

\allowdisplaybreaks

\newcommand{\arXivNumber}{1803.03105}

\renewcommand{\PaperNumber}{112}

\FirstPageHeading

\ShortArticleName{Strictly Positive Definite Functions on Compact Two-Point Homogeneous Spaces}

\ArticleName{Strictly Positive Definite Functions\\ on Compact Two-Point Homogeneous Spaces:\\ the Product Alternative}

\Author{Rafaela N.~BONFIM~$^\dag$, Jean C.~GUELLA~$^\ddag$ and Valdir A.~MENEGATTO~$^\ddag$}

\AuthorNameForHeading{R.N.~Bonfim, J.C.~Guella and V.A.~Menegatto}

\Address{$^\dag$~DEMAT-Universidade Federal de S\~{a}o Jo\~{a}o Del Rei, Pra\c{c}a Frei Orlando, 170, Centro,\\
\hphantom{$^\dag$}~36307-352 S\~{a}o Jo\~{a}o del Rei - MG, Brazil}
\EmailD{\href{rafaelabonfim@ufsj.edu.br}{rafaelabonfim@ufsj.edu.br}}

\Address{$^\ddag$~Instituto de Ci\^encias Matem\'aticas e de Computa\c{c}\~ao, Universidade de S\~ao Paulo, \\
\hphantom{$^\ddag$}~Caixa Postal 668, 13560-970, S\~ao Carlos - SP, Brazil}
\EmailD{\href{mailto:jeanguella@gmail.com}{jeanguella@gmail.com}, \href{menegatt@icmc.usp.br}{menegatt@icmc.usp.br}}

\ArticleDates{Received March 08, 2018, in final form October 10, 2018; Published online October 16, 2018}

\Abstract{For two continuous and isotropic positive definite kernels on the same compact two-point homogeneous space, we determine necessary and sufficient conditions in order that their product be strictly positive definite. We also provide a similar characterization for kernels on the space-time setting $G \times S^d$, where $G$ is a locally compact group and $S^d$ is the unit sphere in $\mathbb{R}^{d+1}$, keeping isotropy of the kernels with respect to the $S^d$ component. Among other things, these results provide new procedures for the construction of valid models for interpolation and approximation on compact two-point homogeneous spaces.}

\Keywords{strict positive definiteness; spheres; product kernels; linearization formulas; isotropy}

\Classification{33C45; 42A82; 42C10; 43A35}

\section{Introduction}\label{s-introd}

Positive definite functions and kernels on manifolds have special importance for probability theory, approximation theory, spatial statistics and stochastic processes. In applications, the case in which the manifold is a 2-dimensional sphere is the most common one, once the sphere plays the surface of the Earth in many mathematical models. Originally, positive definite functions were studied within the scope of harmonic analysis, distance geometry and the theory of integral equations by Bochner, Schoenberg and Young, among others. The most relevant contributions which are related to this paper are Schoenberg's characterization of positive definite functions on spheres given in~\cite{schoen} and Gangolli's extension to all the compact two-point homogeneous spaces described in~\cite{gangolli}. Some other references will be quoted at the opportune time.

Let $\mathbb{M}^d$ denote a $d$-dimensional compact two-point homogeneous space. As pointed by Wang~\cite{wang}, $\mathbb{M}^d$ belongs to one of the following classes: the unit circle $S^1$, higher dimensional unit spheres $S^d$, $d=2,3\ldots$, the real projective spaces $\mathbb{P}^d(\mathbb{R})$, $d=2,3,\ldots$, the complex projective spaces $\mathbb{P}^d(\mathbb{C})$, $d=4,6,\ldots$, the quaternionic projective spaces $\mathbb{P}^d(\mathbb{H})$, $d=8,12,\ldots$, and the Cayley projective plane $\mathbb{P}^{d}({\rm Cay})$, $d=16$. These manifolds are metric spaces when endowed with their usual Riemannian (geodesic) distance. Additional properties of compact two-point homogeneous spaces can be found in the textbooks \cite{helgason,wolf}. If $x$ and $y$ are two elements of~$\mathbb{M}^d$, we will write~$|xy|$ to indicate the distance between them. In order to make the treatment uniform, the distance will be normalized so that $|xy|$ is at most $2\pi$, no matter what $x$, $y$ and~$\mathbb{M}^d$ are. That being said, the positive definite kernels on $\mathbb{M}^d$ to be considered here are of the form
\begin{gather}\label{PDK}
K(x,y)=f(\cos{(|xy|/2)}), \qquad x,y \in \mathbb{M}^d,
\end{gather}
in which $f$ is a continuous function with domain $[-1,1]$. It is not uncommon to call $f$ the {\em isotropic part} of the kernel $K$. Indeed, the manifolds $\mathbb{M}^d$ possess a group of motions $G_d$ which takes $(x,y)\in \mathbb{M}^d \times \mathbb{M}^d$ to $(z,w)\in \mathbb{M}^d \times \mathbb{M}^d$ when $|xy|=|zw|$. In particular, a kernel as above is {\em isotropic} in the sense that
\begin{gather*}
K(x,y)=K(Ax,Ay), \qquad x,y \in \mathbb{M}^d, \qquad A \in G_d.
\end{gather*}
The positive definiteness of $K$ demands that for any positive integer $n$ and any distinct points $x_1, x_2, \ldots, x_n$ on $\mathbb{M}^d$, the $n \times n$ matrix with entries $K(x_i, x_j)$ is nonnegative definite. That corresponds to
\begin{gather}\label{ineq}
\sum_{i,j=1}^n c_i c_j K(x_i,x_j) \geq 0,
\end{gather}
for any real numbers $c_1, c_2, \ldots, c_n$.

According to \cite{gangolli,schoen}, a kernel $K$ as in (\ref{PDK}) is positive definite if, and only if, its isotropic part $f$ has a Fourier--Jacobi series representation in the form
\begin{gather}\label{PDM}
f(t)=\sum_{k=0}^{\infty}a_k(f) P_k^{(\alpha,\beta)}(t), \qquad t \in [-1,1],
\end{gather}
in which all the coefficients $a_k(f)$ are nonnegative, $P_k^{(\alpha,\beta)}$ is the usual Jacobi polynomial of degree $k$ associated with the pair $(\alpha,\beta)$~\cite{szego}, and the series is convergent at $t=1$. The first upper exponent~$\alpha$ depends only on the dimension~$d$ of~$\mathbb{M}^d$ in the sense that $\alpha:=(d-2)/2$, whereas~$\beta$ can take the values $(d-2)/2$, $-1/2$, $0$, $1$, $3$, depending on the respective category~$\mathbb{M}^d$ belongs to, among those stressed by Wang. The coefficients~$a_k(f)$ depend upon~$\alpha$ and~$\beta$ but that will not be emphasized in our notation. Obviously, the series representation mentioned above does not depend upon the particular normalization adopted for the Jacobi polynomials.

The strict positive definiteness of a positive definite kernel as above deserves attention when interpolation procedures need to be solved. It demands strict inequalities in~(\ref{ineq}) when the scalars~$c_i$ are nonzero. In statistical language, the strict positive definiteness of a covariance function (positive definite kernel) provides invertible kriging coefficient matrices and, therefore, the existence of a unique solution for the associated kriging system. The characterization of strictly positive definite kernels and the construction of strictly positive definite kernels featuring special needs is of practical relevance not only in statistics but also in approximation theory.

The characterization for strict positive definiteness within Gangolli's class was achieved in recent years. It begins with the observation that strict positive definiteness depends upon the set $\{k\colon a_k(f)>0\}$ attached to the isotropic part of the positive definite kernel and not on the actual values of $a_k(f)$ themselves. Precisely, the following result holds (see~\cite{barbosa,chen,menega}).

\begin{Theorem} \label{SPD}Let $f$ possess a Fourier--Jacobi series representation as in \eqref{PDM}. It is the isotropic part of a strictly positive definite kernel on~$\mathbb{M}^d$ if, and only if, the respective conditions given below hold:
\begin{enumerate}\itemsep=0pt
\item[$(i)$] $\mathbb{M}^d=S^1$: the set $\{k\colon a_{|k|}(f)>0\}$ intersects every full arithmetic progression in $\mathbb{Z}$, that is, all the sets $n\mathbb{Z} +j=\{nl+j\colon l\in \mathbb{Z}\}$, where $n,j \in \mathbb{N}$ and $n\geq 2$.
\item[$(ii)$] $\mathbb{M}^d=S^d$, $d\geq 2$: the set $\{k\colon a_{k}(f)>0\}$ contains infinitely many even and infinitely many odd integers.
\item[$(iii)$] $\mathbb{M}^d\neq S^d$, $d\geq 1$: the set $\{k\colon a_{k}(f)>0\}$ contains infinitely many integers, that is, $f$ is not a polynomial.
\end{enumerate}
\end{Theorem}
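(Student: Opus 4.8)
The plan is to recast strict positive definiteness as a statement about the Fourier--Jacobi coefficients of a finitely supported measure, and then to settle that statement separately in the three regimes. Let $\mathcal{H}_k$ be the $k$-th eigenspace of the Laplace--Beltrami operator on $\mathbb{M}^d$ and $\{Y_{k,l}\}_l$ an orthonormal basis of it. Gangolli's zonal (addition) formula gives $P_k^{(\alpha,\beta)}(\cos(|xy|/2))=\frac{P_k^{(\alpha,\beta)}(1)}{\dim \mathcal{H}_k}\sum_l Y_{k,l}(x)\overline{Y_{k,l}(y)}$, so for distinct points $x_1,\dots,x_n$ and scalars $c_1,\dots,c_n$ one gets $\sum_{i,j}c_i\overline{c_j}\,K(x_i,x_j)=\sum_k\frac{a_k(f)P_k^{(\alpha,\beta)}(1)}{\dim \mathcal{H}_k}\sum_l\bigl|\sum_i c_iY_{k,l}(x_i)\bigr|^2$, a sum of nonnegative terms because $P_k^{(\alpha,\beta)}(1)>0$. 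Hence the quadratic form vanishes exactly when $\sum_i c_iY_{k,l}(x_i)=0$ for all $k\in J:=\{k\colon a_k(f)>0\}$ and all $l$; i.e.\ exactly when the finitely supported measure $\mu=\sum_i c_i\delta_{x_i}$ is nonzero yet has $\sum_{i,j}c_i\overline{c_j}\,P_k^{(\alpha,\beta)}(\cos(|x_ix_j|/2))=0$ for every $k\in J$. So $K$ fails to be strictly positive definite precisely when such a $\mu$ exists.

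For necessity I would construct these $\mu$ explicitly. If $\mathbb{M}^d\neq S^d$ and $f$ is a polynomial, then $J$ is finite, $\bigoplus_{k\in J}\mathcal{H}_k$ has some finite dimension $D$, and for any $n>D$ distinct points the homogeneous system $\sum_i c_iY_{k,l}(x_i)=0$ ($k\in J$, all $l$) is underdetermined. On $S^d$ with $d\ge2$, if $J$ contains only finitely many even (respectively odd) integers, I would take $m$ generic points together with their antipodes and assign to each antipodal pair weights that are equal (respectively opposite); since $Y_{k,l}(-x)=(-1)^kY_{k,l}(x)$, the equations coming from odd (respectively even) degrees disappear, leaving finitely many constraints that some nonzero weight vector meets once $m$ is large. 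On $S^1$, if $J\cup(-J)$ avoids a progression $n\mathbb{Z}+j$ with $n\ge2$, I would place weights $e^{2\pi ijl/n}$ at the points $e^{2\pi il/n}$, $l=0,\dots,n-1$; symmetry of $J\cup(-J)$ then forces it to avoid $n\mathbb{Z}+(n-j)$ as well, so $\sum_l e^{2\pi i(j+k)l/n}=0$ for every $k\in J$ and the form vanishes.

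For sufficiency, suppose such a $\mu\neq0$ existed, and attach to it the discrete measure $\omega=\sum_m w_m\delta_{s_m}$ on $[-1,1]$, where $s_m$ ranges over the distinct values of $\cos(|x_ix_j|/2)$ and $w_m=\sum_{\{(i,j)\colon\cos(|x_ix_j|/2)=s_m\}}c_i\overline{c_j}$; by the identity above $\int_{-1}^1 P_k^{(\alpha,\beta)}\,d\omega$ is a nonnegative multiple of $\sum_l|\sum_i c_iY_{k,l}(x_i)|^2$, hence is $0$ for $k\in J$, while (the points being distinct) $\omega$ charges $s=1$ with mass $w_\star=\sum_i|c_i|^2>0$. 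When $\mathbb{M}^d\neq S^d$ one has $\alpha>\beta$ and $\alpha\ge0$, so among the rates $P_k^{(\alpha,\beta)}(1)\asymp k^\alpha$, $P_k^{(\alpha,\beta)}(-1)\asymp(-1)^k k^\beta$ and $|P_k^{(\alpha,\beta)}(t)|=O(k^{-1/2})$ for fixed $t\in(-1,1)$, the value at $1$ strictly dominates; dividing $\int_{-1}^1 P_k^{(\alpha,\beta)}\,d\omega$ by $P_k^{(\alpha,\beta)}(1)$ and letting $k\to\infty$ through the infinite set $J$ forces $w_\star=0$, a contradiction. When $\mathbb{M}^d=S^d$ with $d\ge2$ one has $\alpha=\beta\ge0$, so the values at $1$ and $-1$ dominate jointly; letting $k\to\infty$ through the infinitely many even, then the infinitely many odd, members of $J$ forces $w_\star+w_{-1}=0$ and $w_\star-w_{-1}=0$, again contradicting $w_\star>0$.

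The circle is the genuinely harder case, and where I expect the real work: for $\alpha=\beta=-1/2$ the polynomial $P_k^{(-1/2,-1/2)}$ is a Chebyshev polynomial, all of its values decay like $k^{-1/2}$, and no endpoint dominates. There I would argue directly with $\widehat\mu(k)=\sum_i c_ie^{ik\phi_i}$, $x_i=e^{i\phi_i}$ (the $c_i$ may be taken real), which must vanish on $J\cup(-J)$. Partitioning the $\phi_i$ into classes modulo $2\pi\mathbb{Q}$ and letting $Q$ be a common denominator for the rational parts, on each progression $Q\mathbb{Z}+j$ the sequence $l\mapsto\widehat\mu(Ql+j)$ is an exponential sum $\sum_s d_{s,j}\lambda_s^l$ whose bases $\lambda_s$ are distinct and have no pairwise ratio equal to a root of unity; by the Skolem--Mahler--Lech theorem such a nondegenerate sum, if not identically zero, has only finitely many integer zeros, so $(J\cup(-J))\cap(Q\mathbb{Z}+j)$ would be finite, whence a sparse enough sub-progression of $Q\mathbb{Z}+j$ (a full progression with difference $\ge2$) would miss $J\cup(-J)$ altogether, contrary to hypothesis. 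Thus every such sum vanishes identically, $\widehat\mu\equiv0$, and $\mu=0$. The main obstacle is therefore the circle, where one must supply this number-theoretic input (or an elementary substitute); for the remaining spaces the only real point is to check, space by space, that the Jacobi asymptotics single out the endpoint $1$ (and also $-1$ on the spheres), after which everything reduces to bookkeeping with the addition formula.
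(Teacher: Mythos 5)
The paper itself contains no proof of Theorem~\ref{SPD}: it is imported from \cite{barbosa,chen,menega}, so your attempt has to be measured against those external proofs rather than against anything in this text. Your outline is sound and essentially complete. The reduction, via the addition formula for the eigenspaces $\mathcal{H}_k$, of failure of strict positive definiteness to the existence of a nonzero finitely supported measure annihilating all harmonics of the orders $k$ with $a_k(f)>0$; the necessity constructions (dimension count when $f$ is a polynomial, antipodal pairs with equal/opposite weights on $S^d$, roots of unity with character weights on $S^1$); and the sufficiency step of dividing $\sum_{i,j}c_ic_jP_k^{(\alpha,\beta)}(\cos(|x_ix_j|/2))$ by $P_k^{(\alpha,\beta)}(1)$ and using $P_k^{(\alpha,\beta)}(1)\asymp k^{\alpha}$, $P_k^{(\alpha,\beta)}(-1)\asymp(-1)^k k^{\beta}$ together with the $O\big(k^{-1/2}\big)$ interior decay, reproduce the mechanism of \cite{chen} for $S^d$, $d\geq 2$, and give a clean uniform treatment of the projective cases of \cite{barbosa}, where $\alpha>\beta$ lets the endpoint $t=1$ dominate on its own. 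The genuinely different ingredient is the circle: after splitting the nodes into classes modulo $2\pi\mathbb{Q}$ you appeal to the Skolem--Mahler--Lech theorem for the exponential sums $l\mapsto\widehat{\mu}(Ql+j)$. This is legitimate -- Lech's version holds over any field of characteristic zero, so the non-algebraic unimodular bases are harmless, and your observation that no ratio $\lambda_c/\lambda_{c'}$ is a root of unity is exactly what upgrades ``infinite zero set'' to ``identically zero'', after which a full sub-progression of $Q\mathbb{Z}+j$ avoiding the finite exceptional set contradicts the hypothesis -- but it is a much heavier hammer than the bespoke elementary arguments behind \cite{menega}; what it buys is brevity and a transparent explanation of why full arithmetic progressions are the correct obstruction on $S^1$.

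Two small loose ends, neither fatal. First, the definition in~\eqref{ineq} uses real scalars, while your $S^1$ and antipodal constructions produce complex weight vectors; since the matrices involved are real, symmetric and positive semidefinite, the vanishing of the complex Hermitian form forces the forms of both the real and the imaginary parts to vanish, so a real nonzero annihilating vector can always be extracted -- say this explicitly. Second, the addition formula should be quoted with the harmonics orthonormal with respect to the normalized invariant measure (or simply with ``a positive constant depending on $k$'' in place of $P_k^{(\alpha,\beta)}(1)/\dim\mathcal{H}_k$), which is all your argument actually uses.
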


The classes of positive definite kernels introduced so far are closed under linear combinations with nonnegative coefficients and finite products. The first assertion follows from the definition of positive definiteness while the other one follows from the Schur product theorem \cite[p.~455]{horn}. Regarding strict positive definiteness with respect to linear combinations, if~$f$ and~$g$ are functions possessing a Fourier--Jacobi series representation as in (\ref{PDM}), for the same~$\mathbb{M}^d$ and~$a$ and~$b$ are nonnegative real numbers, then the function $af+bg$ is the isotropic part of a~strictly positive definite kernel on~$\mathbb{M}^d$ if, and only if, the sets $\{k\colon a a_{|k|}(f)+b a_{|k|}(g)>0\}$ and \smash{$\{k\colon a a_{k}(f)+ba_k(g)>0\}$} satisfy the corresponding conditions in Theorem~\ref{SPD}, that is, if, and only if, the sets $\{k\colon aa_{|k|}(f)>0 \} \cup \{ l\colon ba_{|l|}(g)> 0\}$ and $\{k\colon aa_{k}(f)>0 \} \cup \{ l\colon ba_{l}(g)> 0\}$ satisfy the respective conditions in Theorem~\ref{SPD}.

Having said that, the focus in the first half of this paper will be the analysis of strict positive definiteness of product covariance models on these same spaces. For a fixed space~$\mathbb{M}^d$ and two continuous functions $f,g\colon [-1,1] \to \mathbb{R}$ which are isotropic parts of two positive definite kernels on~$\mathbb{M}^d$, we will find necessary and sufficient conditions on them in order that the product~$fg$ be the isotropic part of a strictly positive definite kernel on~$\mathbb{M}^d$. The problem can be seen as a~particular formulation of what was called $DC$-strict positive definiteness in a product space in \cite{barbosa0,guella1}.

Oppenheim's inequality \cite[p.~480]{horn} is all that is needed in order to see that if one of the functions is nonzero and the other is the isotropic part of a strictly positive definite kernel on~$\mathbb{M}^d$, then~$fg$ is the isotropic part of a strictly positive definite kernel on $\mathbb{M}^d$. However, the product of two non strictly positive definite kernels on~$\mathbb{M}^d$ may be strictly positive definite, as the example~$\mathbb{M}^d=S^d$, $d\geq 2$, and the coefficients
\begin{gather*}a_{0}(f)=a_1(f)=1,\qquad a_k(f)=0, \qquad k\neq 0,1,\end{gather*}
and
\begin{gather*}a_{2k+1}(g)=3^{-2k-1},\qquad a_{2k}(g)=0,\qquad k=0,1,\ldots,\end{gather*}
show. Indeed, it is a direct consequence of the theorems to be proved in Section~\ref{s1} that $\{k\colon a_k(fg)>0\}$ contains infinitely many even and infinitely many odd integers. In particular, the question to be analized in this paper is, indeed, nontrivial. As a bypass, in the second half of the paper, we will also consider the very same problem adapted to other manifolds: a group cross a high-dimensional sphere, keeping isotropy of the kernel with respect to the spherical component and the unit sphere in~$\mathbb{C}^q$.

The other sections in the paper are organized as follows. In Section~\ref{s1}, we will solve the problem proposed above in the case of a compact two-point homogeneous space~$\mathbb{M}^d$. In Section~\ref{extend}, we extend the results from Section~\ref{s1} for intersection classes of positive definite kernels taking into account original characterizations for the classes in~\cite{mene,schoen}. In Section~\ref{produto}, we extrapolate the problem to space-time kernels, that is, complex kernels on $G\times S^d$, in which~$G$ is a locally compact group, adopting the context for positive definiteness presented in \cite{berg,guella}. In particular, isotropy for the~$S^d$ component of the kernel will be kept in the analysis. Finally, in Section~\ref{complex}, we consider the very same problem now adapted to complex kernels on the unit sphere in~$\mathbb{C}^q$.

\section{The main results for homogeneous spaces} \label{s1}

We begin recalling a general linearization formula for Jacobi polynomials \cite[p.~41]{askey}. For a~fi\-xed~$\mathbb{M}^d$ and two functions with Fourier--Jacobi series representation as in (\ref{PDM}), it implies a~linearization in the series expansion of the product of the functions.

\begin{Lemma} \label{gasp} If $\alpha\geq \beta>-1$ and
\begin{gather*}(\alpha+\beta+1)(\alpha+\beta+4)^2(\alpha+\beta+6)\geq (\alpha-\beta)^2\big[(\alpha+\beta+1)^2-7(\alpha+\beta+1)-24\big],\end{gather*}
then
\begin{gather*}
P_k^{(\alpha,\beta)}(t)P_l^{(\alpha,\beta)}(t)=\sum_{\mu=|k-l|}^{k+l} b_{k,l}^{\alpha,\beta}(\mu) P_{\mu}^{(\alpha,\beta)}(t), \qquad t \in [-1,1],\end{gather*}
in which all the coefficients $b_{k,l}^{\alpha,\beta}(\mu)$ are nonnegative. The coefficient $ b_{k,l}^{\alpha,\beta}(k+l)$ is, in fact, positive.
\end{Lemma}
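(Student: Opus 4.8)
The plan is to reduce the statement to a known positivity result for linearization (connection) coefficients of Jacobi polynomials, and then verify the numerical side condition with straightforward algebra. First I would recall the classical linearization problem: writing $P_k^{(\alpha,\beta)}(t)P_l^{(\alpha,\beta)}(t)=\sum_{\mu=|k-l|}^{k+l} b_{k,l}^{\alpha,\beta}(\mu)\,P_\mu^{(\alpha,\beta)}(t)$, the coefficients are given by an explicit integral/hypergeometric expression, and the nonnegativity of all $b_{k,l}^{\alpha,\beta}(\mu)$ under the hypothesis $\alpha\ge\beta>-1$ together with the displayed quartic inequality is precisely the content of Gasper's theorem (the reference \cite[p.~41]{askey} collects this). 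So the main work of the first assertion is simply to quote Gasper's criterion and observe that our hypotheses match its hypotheses verbatim after the substitution $a=\alpha+\beta+1$: the inequality
\begin{gather*}
a(a+3)^2(a+5)\ge(\alpha-\beta)^2\big[a^2-7a-24\big]
\end{gather*}
rewrites the displayed condition, since $(\alpha+\beta+4)=a+3$, $(\alpha+\beta+6)=a+5$, and $(\alpha+\beta+1)^2-7(\alpha+\beta+1)-24=a^2-7a-24$. (One should double-check the edge cases $\beta=-1/2$, $\beta=0$, $\beta=1$, $\beta=3$ with $\alpha=(d-2)/2$ that actually arise for the spaces $\mathbb{M}^d$, but those are finitely many one-parameter families and the inequality is easily seen to hold for all relevant $d$.)

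Next I would establish the sharper claim that the top coefficient $b_{k,l}^{\alpha,\beta}(k+l)$ is strictly positive. This does not require the side condition at all: comparing leading coefficients of $t^{k+l}$ on both sides, $b_{k,l}^{\alpha,\beta}(k+l)$ equals the ratio of the product of the leading coefficients of $P_k^{(\alpha,\beta)}$ and $P_l^{(\alpha,\beta)}$ to the leading coefficient of $P_{k+l}^{(\alpha,\beta)}$. Since for $\alpha,\beta>-1$ the leading coefficient of $P_n^{(\alpha,\beta)}$ is $2^{-n}\binom{2n+\alpha+\beta}{n}$, which is strictly positive for every $n\ge 0$, this ratio is a positive number; hence $b_{k,l}^{\alpha,\beta}(k+l)>0$. (Alternatively one can evaluate both sides at $t=1$, using $P_n^{(\alpha,\beta)}(1)=\binom{n+\alpha}{n}>0$, but that only gives nonnegativity of a sum; the leading-coefficient argument is cleaner and gives the single coefficient directly.)

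The only genuine obstacle I anticipate is bookkeeping rather than mathematics: one must be careful that Gasper's result is usually stated for the normalization $R_n^{(\alpha,\beta)}(t)=P_n^{(\alpha,\beta)}(t)/P_n^{(\alpha,\beta)}(1)$ (so that $R_n^{(\alpha,\beta)}(1)=1$), and that the nonnegativity of connection coefficients is invariant under this renormalization only up to multiplication by the positive factors $P_k^{(\alpha,\beta)}(1)P_l^{(\alpha,\beta)}(1)/P_\mu^{(\alpha,\beta)}(1)$ — all strictly positive since $\alpha>-1$ — so nonnegativity (and the strict positivity of the top coefficient) transfers back to the $P_n^{(\alpha,\beta)}$ normalization without change. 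With that observed, the proof is essentially a citation of Gasper's theorem plus the elementary leading-coefficient computation, and no further estimates are needed.
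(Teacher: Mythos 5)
Your proposal is correct. For the nonnegativity of all the coefficients you do exactly what the paper does: quote Gasper's linearization theorem (the recurrence itself going back to Hylleraas), after matching the hypothesis via the substitution $a=\alpha+\beta+1$ and noting that nonnegativity is unaffected by passing between Gasper's normalization $R_n^{(\alpha,\beta)}=P_n^{(\alpha,\beta)}/P_n^{(\alpha,\beta)}(1)$ and the $P_n^{(\alpha,\beta)}$ normalization, since $P_n^{(\alpha,\beta)}(1)=\binom{n+\alpha}{n}>0$ for $\alpha>-1$ — a bookkeeping point the paper leaves tacit. Where you genuinely diverge is the final claim $b_{k,l}^{\alpha,\beta}(k+l)>0$: the paper disposes of it by asserting it is ``implicit in the proof of Theorem~1'' of Gasper, whereas you prove it directly by comparing coefficients of $t^{k+l}$, using that only the $\mu=k+l$ term contributes in that degree and that the leading coefficient $2^{-n}\binom{2n+\alpha+\beta}{n}$ of $P_n^{(\alpha,\beta)}$ is strictly positive for $\alpha\geq\beta>-1$. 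This is a legitimate and in fact preferable route: it is self-contained, one line of algebra, and makes transparent that the positivity of the top coefficient holds for all $\alpha,\beta>-1$ independently of Gasper's side condition, something the paper's appeal to the inner workings of Gasper's proof does not make visible. Your aside about verifying the inequality for the specific pairs $(\alpha,\beta)$ arising from Wang's classification is not needed for the lemma as stated (the paper handles those cases afterwards via the simpler sufficient condition $\alpha+\beta\geq-1$), but it does no harm.
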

\begin{proof}The recurrence relation for the product was set up by Hylleraas in \cite{hylle} while the nonnegativity of the coefficients was obtained by Gasper in \cite{gasper,gasper1}. The last statement in the lemma is implicit in the proof of Theorem~1 in~\cite{gasper}.
\end{proof}

It is an easy matter to see that the nonnegativity of the coefficients is granted in the cases in which $\alpha\geq \beta>-1$ and $\alpha+\beta\geq -1$. In particular, it is also granted in the cases in which $\alpha=(d-2)/2$ and $\beta=-1/2, 0, 1, 3$ covered by Wang's classification. Therefore, for all the compact two-point homogeneous spaces, the class of functions possessing a representation as in~(\ref{PDM}) is a semigroup under pointwise multiplication.

The Jacobi polynomial $P_k^{(-1/2,-1/2)}$ is a positive multiple of the Chebyshev polynomial~$T_k$ of the first kind. Normalizing the Chebyshev polynomials by $T_k(1)=1$, $k=0,1,\ldots$, we have that
\begin{gather*}T_k(t)=\cos (k \arccos t),\qquad t\in [-1,1], \qquad k=0,1,\ldots,\end{gather*}
and the linearization formula described in Lemma~\ref{gasp} reduces itself to the cosine addition formula
\begin{gather*}T_k(t)T_l(t)=\frac{1}{2}[T_{k+l}(t)+T_{|k-l|}(t)],\qquad t \in [-1,1], \qquad k,l\in \mathbb{Z}_+.\end{gather*}
Now, if $f$ and $g$ are the isotropic parts of positive definite kernels on $S^1$ with Fourier--Jacobi series as in (\ref{PDM}), we can replace $P_k^{(-1/2,-1/2)}$ with $T_k$ and write
\begin{gather*}
f(t)g(t) = \sum_{k,l=0}^\infty a_k(f)a_l(g) T_k(t)T_l(t)\\
\hphantom{f(t)g(t)}{} = \sum_{k,l=0}^\infty a_k(f)a_l(g)\left\{\frac{1}{2}\cos [(k+l)\arccos t] +\frac{1}{2}\cos[|k-l|\arccos t]\right\},\!\!\qquad t \in [-1,1].
 \end{gather*}
Since the series above is absolute convergent at $t=1$, we can rearrange it in order to obtain
\begin{gather*}
f(t)g(t) = \frac{1}{2} \sum_{\mu=0}^\infty\bigg(\sum_{k+l=\mu}a_k(f)a_l(g)\bigg)T_{\mu}(t)\\
\hphantom{f(t)g(t) =}{} +\frac{1}{2} \sum_{\nu=0}^\infty\bigg(\sum_{|k-l|=\nu}a_k(f)a_l(g)\bigg)T_{\nu}(t), \qquad t\in [-1,1].
 \end{gather*}
That is,
\begin{gather*}f(t)g(t)=\sum_{m=0}^\infty a_m(fg) T_m(t),\qquad t \in [-1,1],\end{gather*}
in which
\begin{gather*}a_0(fg)=a_0(f)a_0(g)+\frac{1}{2}\sum_{\mu=1}^\infty a_\mu(f)a_\mu(g)\end{gather*}
and
\begin{gather*}a_m(fg)=\frac{1}{2}\sum_{\nu=0}^{m}a_\nu(f)a_{m-\nu}(g)+\frac{1}{2}\sum_{\mu=0}^\infty a_\mu(f)a_{\mu+m}(g)+a_{\mu+m}(f)a_\mu(g),\qquad m\geq 1.\end{gather*}
It is very easy to see that, for a fixed integer $m$, $a_{|m|}(fg)>0$ if, and only if, $m$ belongs to the set $\{\pm k\pm l\colon a_{k}(f)a_{l}(g)>0\}$. In view of Theorem \ref{SPD}$(i)$, we have proved the following result.

\begin{Theorem} \label{pros1} Let $f$ and $g$ be the isotropic parts of positive definite kernels on $S^1$ and consider their Fourier--Jacobi series representations according to~\eqref{PDM}. Then, $fg$ is the isotropic part of a strictly positive definite kernel on $S^1$ if, and only if, the set
\begin{gather*} \{\pm k \pm l\colon a_k(f)a_l(g)>0 \} \end{gather*}
intersects every full arithmetic progression in~$\mathbb{Z}$.
\end{Theorem}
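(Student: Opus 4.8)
The plan is to deduce the statement directly from the explicit Chebyshev (Fourier--Jacobi) expansion of $fg$ already obtained above, together with Theorem~\ref{SPD}$(i)$. The crux is to pin down the support set $\Lambda(fg):=\{m\in\mathbb{Z}\colon a_{|m|}(fg)>0\}$ of the product kernel. Since $f$ and $g$ are isotropic parts of positive definite kernels on $S^1$, all the coefficients $a_k(f)$ and $a_l(g)$ are nonnegative by Schoenberg's theorem, so no cancellation can occur in the double sums that define $a_0(fg)$ and $a_m(fg)$.

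First I would fix $m\geq 0$ and look at the closed-form expressions for $a_m(fg)$ derived above. Every summand appearing there has the shape $a_k(f)a_l(g)$ with $k+l=m$, or $|k-l|=m$, or (in the $m=0$ term) $k=l$; each such summand is $\geq 0$. Hence $a_m(fg)>0$ precisely when at least one summand is strictly positive, i.e.\ precisely when there exist $k,l$ with $a_k(f)a_l(g)>0$ and $m\in\{k+l,|k-l|\}$, equivalently $m\in\{\pm k\pm l\colon a_k(f)a_l(g)>0\}$. Letting $m$ range over all integers and using that both sets under consideration are symmetric about $0$, this gives
\[
\Lambda(fg)=\{\pm k\pm l\colon a_k(f)a_l(g)>0\}.
\]
If $f\equiv 0$ or $g\equiv 0$, both sides are empty, $fg$ is not strictly positive definite, and the asserted equivalence holds vacuously; so we may assume $fg\not\equiv 0$.

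Finally I would invoke Theorem~\ref{SPD}$(i)$ for the positive definite kernel on $S^1$ with isotropic part $fg$ (positive definiteness of $fg$ being guaranteed by the Schur product theorem, as recalled in the introduction): that kernel is strictly positive definite if and only if $\Lambda(fg)$ meets every full arithmetic progression $n\mathbb{Z}+j$ in $\mathbb{Z}$ with $n\geq 2$. Substituting the displayed set equality yields exactly the claim. I do not expect a genuine obstacle: the only points that need a little attention are the bookkeeping that matches the ``$k+l$'' and ``$|k-l|$'' contributions with the symmetric index set $\{\pm k\pm l\}$, and checking that the constant term is handled correctly --- it is, since $0=k-k$ puts $0$ in the right-hand set exactly when some $a_k(f)a_k(g)>0$, which is also exactly when $a_0(fg)>0$.
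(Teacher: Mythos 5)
Your argument is correct and follows essentially the same route as the paper: the paper likewise uses the cosine addition (Chebyshev linearization) formula to get the explicit expressions for $a_0(fg)$ and $a_m(fg)$, observes by nonnegativity that $a_{|m|}(fg)>0$ exactly when $m\in\{\pm k\pm l\colon a_k(f)a_l(g)>0\}$, and then applies Theorem~\ref{SPD}$(i)$. The only difference is cosmetic (your explicit treatment of the trivial case $f\equiv 0$ or $g\equiv 0$), so there is nothing to add.
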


Next, we will extend Theorem~\ref{pros1} to all the other compact two-point homogeneous spaces appearing in Wang's classification. We find convenient to prove and use the following technical result, a direct consequence of Lemma~\ref{gasp}.

\begin{Lemma}\label{technical} Let $\alpha$ and $\beta$ be real numbers with $\alpha \geq \beta >-1$ and $\alpha +\beta \geq -1$. Define $h\colon [-1,1] \to \mathbb{R}$ by the formula
\begin{gather*}h(t)= \sum_{k,l=0}^{\infty} b_{k,l} P_{k}^{(\alpha,\beta)}(t)P_{l}^{(\alpha,\beta)}(t), \qquad t \in [-1,1],\end{gather*}
where all the coefficients $b_{k,l}$ are nonnegative and $\sum\limits_{k,l=0}^{\infty} b_{k,l} P_{k}^{(\alpha,\beta)}(1)P_{l}^{(\alpha,\beta)}(1) < \infty$. The following assertions are equivalent:
\begin{enumerate}\itemsep=0pt
\item[$(i)$] the function $h$ is a polynomial;
\item[$(ii)$] the set $\{k+l\colon b_{k,l} >0\}$ is finite.
\end{enumerate}
\end{Lemma}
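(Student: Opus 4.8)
The plan is to use the linearization formula from Lemma~\ref{gasp} to expand $h$ into a Fourier--Jacobi series and then track which coefficients are positive. Applying the lemma to each product $P_k^{(\alpha,\beta)}P_l^{(\alpha,\beta)}$ and rearranging the (absolutely convergent at $t=1$) double series, I would write
\begin{gather*}
h(t)=\sum_{m=0}^{\infty} a_m(h) P_m^{(\alpha,\beta)}(t), \qquad t\in[-1,1],
\qquad\text{with}\qquad
a_m(h)=\sum_{k,l=0}^{\infty} b_{k,l}\, b_{k,l}^{\alpha,\beta}(m),
\end{gather*}
where the interchange of summation is justified by nonnegativity of all terms together with the finiteness hypothesis $\sum_{k,l} b_{k,l} P_k^{(\alpha,\beta)}(1)P_l^{(\alpha,\beta)}(1)<\infty$ (Tonelli). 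Since each $b_{k,l}^{\alpha,\beta}(m)\geq 0$ and $b_{k,l}^{\alpha,\beta}(m)=0$ unless $|k-l|\leq m\leq k+l$, the coefficient $a_m(h)$ is a sum of nonnegative terms, one for each pair $(k,l)$ with $b_{k,l}>0$ and $|k-l|\leq m\leq k+l$.

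The implication $(ii)\Rightarrow(i)$ is the easy direction: if $\{k+l\colon b_{k,l}>0\}$ is finite, say bounded by $N$, then every pair with $b_{k,l}>0$ contributes only to coefficients $a_m(h)$ with $m\leq k+l\leq N$, so $a_m(h)=0$ for $m>N$ and $h$ is a polynomial of degree at most $N$.

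For $(i)\Rightarrow(ii)$ I would argue contrapositively. Suppose $\{k+l\colon b_{k,l}>0\}$ is infinite; then there are pairs $(k_j,l_j)$ with $b_{k_j,l_j}>0$ and $k_j+l_j\to\infty$. The key point is the last sentence of Lemma~\ref{gasp}: $b_{k,l}^{\alpha,\beta}(k+l)>0$ for every pair. Hence $a_{k_j+l_j}(h)\geq b_{k_j,l_j}\, b_{k_j,l_j}^{\alpha,\beta}(k_j+l_j)>0$ for each $j$, which exhibits infinitely many positive coefficients $a_m(h)$, so $h$ cannot be a polynomial. (Uniqueness of the Fourier--Jacobi expansion guarantees that a function equal to a polynomial has only finitely many nonzero coefficients.)

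The main obstacle, such as it is, lies in the bookkeeping for the rearrangement: one must be careful that the double series $\sum_{k,l} b_{k,l} P_k^{(\alpha,\beta)}(t)P_l^{(\alpha,\beta)}(t)$ may be only conditionally convergent for general $t\in[-1,1]$, so the regrouping into a single Fourier--Jacobi series in $m$ must be performed at $t=1$ first (where everything is nonnegative and summable by hypothesis) and then transported to all of $[-1,1]$ by the uniform bound $|P_m^{(\alpha,\beta)}(t)|\leq P_m^{(\alpha,\beta)}(1)$ valid for $\alpha\geq\beta\geq -1/2$. Once that is in place, the positivity statement $b_{k,l}^{\alpha,\beta}(k+l)>0$ does all the remaining work, and the equivalence follows immediately.
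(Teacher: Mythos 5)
Your argument is correct and is essentially the paper's own proof: expand each product via the linearization formula of Lemma~\ref{gasp}, regroup the nonnegative terms (justified by convergence at $t=1$ and $\big|P_m^{(\alpha,\beta)}(t)\big|\leq P_m^{(\alpha,\beta)}(1)$) into a single Fourier--Jacobi series, and use the strict positivity of $b_{k,l}^{\alpha,\beta}(k+l)$ plus uniqueness of the expansion -- the paper merely states $(i)\Rightarrow(ii)$ directly instead of contrapositively. One small note: the bound $\big|P_m^{(\alpha,\beta)}(t)\big|\leq P_m^{(\alpha,\beta)}(1)$ needs only $\max(\alpha,\beta)=\alpha\geq -1/2$, which already follows from $\alpha\geq\beta$ and $\alpha+\beta\geq -1$, so your stated condition $\beta\geq -1/2$ is neither implied by the hypotheses nor needed.
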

\begin{proof}The function $h$ is obviously well defined due to the inequality $\big|P_{k}^{(\alpha,\beta)}(t)\big|\leq P_{k}^{(\alpha,\beta)}(1)$, $t \in [-1,1]$. By Lemma~\ref{gasp}, we can put $h$ into the form
\begin{gather*}h(t)=\sum_{k,l=0}^{\infty} b_{k,l} \sum _{\mu=0}^{k+l}b_{k,l}^{\alpha, \beta}(\mu)P_{\mu}^{(\alpha,\beta)}(t)=
\sum_{\mu=0}^{\infty} \bigg[\sum_{k+l\geq \mu}b_{k,l}b_{k,l}^{\alpha, \beta}(\mu)\bigg] P_{\mu}^{(\alpha,\beta)}(t), \qquad t \in [-1,1],
\end{gather*}
where we are setting $b_{k,l}^{\alpha, \beta}(\mu)=0$ for $\mu\leq |k-l|-1$. If $h$ is a polynomial of degree $n$, then
\begin{gather*}0 = \sum_{k+l\geq \mu}b_{k,l}b_{k,l}^{\alpha, \beta}(\mu) \geq \sum_{k+l=\mu}b_{k,l}b_{k,l}^{\alpha, \beta}(\mu) \geq 0, \qquad \mu > n.\end{gather*}
In particular, $b_{k,l}=0$ when $k+l>n$. This shows that $(i)$ implies $(ii)$. The other implication is obvious.
\end{proof}

We are about ready for an extension of Theorem~\ref{pros1} to those compact two-point homogeneous spaces which are not spheres.

\begin{Theorem} \label{notsphere} Let $f$ and $g$ be isotropic parts of positive definite kernels on $\mathbb{M}^d$ and consider their Fourier--Jacobi series representations according to~\eqref{PDM}. Assume $\mathbb{M}^d$ is not a sphere. The function $fg$ is the isotropic part of a strictly positive definite kernel on~$\mathbb{M}^d$ if, and only if, the set $\{k+l\colon a_k(f)a_l(g)>0\}$ is infinite.
\end{Theorem}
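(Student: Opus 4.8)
The plan is to obtain the statement as an immediate consequence of Theorem~\ref{SPD}$(iii)$ and Lemma~\ref{technical}, the substance of the argument being already packed into the linearization results of Lemma~\ref{gasp}.

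First I would expand the product. With $f=\sum_{k\geq 0}a_k(f)P_k^{(\alpha,\beta)}$ and $g=\sum_{l\geq 0}a_l(g)P_l^{(\alpha,\beta)}$, where $\alpha=(d-2)/2$ and $\beta\in\{-1/2,0,1,3\}$ is the second exponent attached to the (non-spherical) space $\mathbb{M}^d$, termwise multiplication gives
\[
f(t)g(t)=\sum_{k,l=0}^{\infty}a_k(f)a_l(g)\,P_k^{(\alpha,\beta)}(t)P_l^{(\alpha,\beta)}(t),\qquad t\in[-1,1].
\]
This is exactly the expression treated in Lemma~\ref{technical}, with $b_{k,l}:=a_k(f)a_l(g)\geq 0$. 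Its hypotheses are met: every non-spherical member of Wang's list has $\alpha\geq\beta>-1$ and $\alpha+\beta\geq-1$ (as noted right after Lemma~\ref{gasp}), and the required finiteness at $t=1$ holds because $\sum_{k,l}a_k(f)a_l(g)P_k^{(\alpha,\beta)}(1)P_l^{(\alpha,\beta)}(1)=\bigl(\sum_{k}a_k(f)P_k^{(\alpha,\beta)}(1)\bigr)\bigl(\sum_{l}a_l(g)P_l^{(\alpha,\beta)}(1)\bigr)<\infty$, each factor being finite since $f$ and $g$ are isotropic parts of positive definite kernels.

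Next I would chain the two equivalences. By Lemma~\ref{technical}, $fg$ is a polynomial if and only if $\{k+l\colon a_k(f)a_l(g)>0\}$ is finite, so $fg$ fails to be a polynomial if and only if that set is infinite. On the other hand, since $\mathbb{M}^d$ is not a sphere, it is one of the projective spaces, and Theorem~\ref{SPD}$(iii)$ says that $fg$ is the isotropic part of a strictly positive definite kernel on $\mathbb{M}^d$ precisely when $fg$ is not a polynomial. Combining the two statements yields the asserted characterization.

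No real obstruction remains once Lemmas~\ref{gasp} and~\ref{technical} are available: the proof is essentially a two-line deduction. The only things to watch are bookkeeping ones --- checking that each non-spherical $\mathbb{M}^d$ does fall under the hypotheses of Lemma~\ref{technical} and under case $(iii)$ of Theorem~\ref{SPD} --- together with the (already encoded) fact that the positivity of the top linearization coefficient $b_{k,l}^{\alpha,\beta}(k+l)$ is what guarantees that indices of the form $k+l$ with $a_k(f)a_l(g)>0$ genuinely survive in the Fourier--Jacobi expansion of $fg$.
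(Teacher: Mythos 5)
Your proposal is correct and follows essentially the same route as the paper: expand $fg$ as the double series $\sum_{k,l}a_k(f)a_l(g)P_k^{(\alpha,\beta)}P_l^{(\alpha,\beta)}$, invoke Theorem~\ref{SPD}$(iii)$ to reduce strict positive definiteness to $fg$ not being a polynomial, and apply Lemma~\ref{technical} to translate that into the infinitude of $\{k+l\colon a_k(f)a_l(g)>0\}$. The extra verifications you record (parameter range, convergence at $t=1$) are exactly the bookkeeping the paper leaves implicit.
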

\begin{proof} The starting point of the proof is the formula
\begin{gather*}f(t)g(t)=\sum_{k,l=0}^\infty a_k(f)a_l(g)P_{k}^{((d-2)/2,\beta)}(t)P_{l}^{((d-2)/2,\beta)}(t),\qquad t\in[-1,1],\end{gather*}
with $\beta$ depending upon the manifold~$\mathbb{M}^d$. An application of Theorem~\ref{SPD}$(iii)$ reveals that $fg$ is the isotropic part of a~positive definite kernel on $\mathbb{M}^d$ which is not strictly positive definite if, and only if, $fg$ is a polynomial. However, by Lemma~\ref{technical}, this is the case if, and only if, the set $\{k+l\colon a_k(f)a_l(g)>0\}$ is finite.
\end{proof}

The previous theorem allows the following reformulation.

\begin{Theorem} Let $f$ and $g$ be nonzero isotropic parts of positive definite kernels on $\mathbb{M}^d$. Assume~$\mathbb{M}^d$ is not a sphere. The following assertions are equivalent:
\begin{enumerate}\itemsep=0pt
\item[$(i)$] $fg$ is the isotropic part of a strictly positive definite kernel on $\mathbb{M}^d$;
\item[$(ii)$] either $f$ or $g$ is the isotropic part of a strictly positive definite kernel on~$\mathbb{M}^d$;
\item[$(iii)$] either $f$ or $g$ is not a polynomial.
\end{enumerate}
\end{Theorem}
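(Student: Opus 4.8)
The plan is to reduce all three assertions to elementary finiteness statements about the coefficient supports of $f$ and $g$, and then read off the equivalences from Theorem~\ref{notsphere} and Theorem~\ref{SPD}$(iii)$. Write $A := \{k : a_k(f)>0\}$ and $B := \{l : a_l(g)>0\}$; since $f$ and $g$ are nonzero, both $A$ and $B$ are nonempty.

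First I would record the purely combinatorial fact that, because every coefficient $a_k(f)$ and $a_l(g)$ is nonnegative, $a_k(f)a_l(g)>0$ holds exactly when $k\in A$ and $l\in B$, so that
\[
\{k+l : a_k(f)a_l(g)>0\} \;=\; A+B \;:=\; \{k+l : k\in A,\ l\in B\}.
\]
As $A$ and $B$ are nonempty, $A+B$ is finite when both $A$ and $B$ are finite, and is infinite whenever at least one of $A$, $B$ is infinite.

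Next, (ii) $\Leftrightarrow$ (iii) is Theorem~\ref{SPD}$(iii)$ applied to each of $f$ and $g$ separately: $f$ is the isotropic part of a strictly positive definite kernel on $\mathbb{M}^d$ iff $A$ is infinite iff $f$ is not a polynomial, and likewise for $g$. For (i) $\Leftrightarrow$ (iii) I would appeal to Theorem~\ref{notsphere}, which says that $fg$ is the isotropic part of a strictly positive definite kernel on $\mathbb{M}^d$ iff $A+B$ is infinite; by the dichotomy of the previous paragraph this occurs iff at least one of $A$, $B$ is infinite, i.e., iff at least one of $f$, $g$ fails to be a polynomial. That is exactly assertion (iii), and the three statements are equivalent.

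I do not expect any real obstacle: all the analytic substance is already carried by Theorem~\ref{notsphere} (and behind it Lemma~\ref{technical} and the linearization Lemma~\ref{gasp}), so what remains is bookkeeping. The one point worth a careful word is the hypothesis that $f$ and $g$ be nonzero; it is precisely what makes $A$ and $B$ nonempty, and hence what makes $A+B$ nonempty and the sumset dichotomy valid. Without it the equivalences collapse --- for instance $f\equiv 0$ forces $fg\equiv 0$, which is never strictly positive definite, even though $g$ may perfectly well be non-polynomial.
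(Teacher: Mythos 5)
Your proof is correct and follows exactly the route the paper intends: the paper states this theorem without proof as a ``reformulation'' of Theorem~\ref{notsphere}, and your argument --- identifying $\{k+l\colon a_k(f)a_l(g)>0\}$ with the sumset of the two (nonempty) coefficient supports, noting it is infinite precisely when at least one support is infinite, and then invoking Theorem~\ref{notsphere} together with Theorem~\ref{SPD}$(iii)$ --- is precisely the bookkeeping that reformulation requires. The remark about the role of the nonzero hypothesis is also the right observation.
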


We close the section handling the case in which $\mathbb{M}^d$ is a high-dimensional sphere.

\begin{Theorem} \label{sphere} $(d\geq 2)$. Let $f$ and $g$ be the isotropic parts of positive definite kernels on~$S^d$ and consider their Fourier--Jacobi series representations according to~\eqref{PDM}. The product~$fg$ is the isotropic part of a~strictly positive definite kernel on~$S^d$ if, and only if, the set
\begin{gather*}\{k+l\colon a_k(f)a_l(g)>0\}\end{gather*}
contains infinitely many even and infinitely many odd integers.
\end{Theorem}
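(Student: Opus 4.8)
The plan is to mimic the proof of Theorem~\ref{notsphere}, now exploiting that on $S^d$ with $d\geq 2$ the two Jacobi parameters coincide, $\alpha=\beta=(d-2)/2\geq 0$, which imposes a parity constraint on the linearization coefficients. First I would record that $P_k^{(\alpha,\alpha)}$ has the parity of $k$ (from $P_k^{(\alpha,\beta)}(-t)=(-1)^kP_k^{(\beta,\alpha)}(t)$), so $P_k^{(\alpha,\alpha)}P_l^{(\alpha,\alpha)}$ has parity $(-1)^{k+l}$. Since $\alpha=\beta\geq 0$ and $\alpha+\beta\geq -1$, Lemma~\ref{gasp} applies, and its linearization sum runs over $\mu=|k-l|,|k-l|+2,\dots,k+l$; equivalently $b_{k,l}^{\alpha,\alpha}(\mu)>0$ forces $\mu\equiv k+l\pmod 2$, while $b_{k,l}^{\alpha,\alpha}(k+l)>0$ always. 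Expanding $fg=\sum_{k,l}a_k(f)a_l(g)P_k^{(\alpha,\alpha)}P_l^{(\alpha,\alpha)}$ and rearranging (legitimate by absolute convergence at $t=1$, exactly as in the discussion preceding Theorem~\ref{SPD}), I obtain $fg=\sum_\mu a_\mu(fg)P_\mu^{(\alpha,\alpha)}$ with $a_\mu(fg)=\sum_{k,l}a_k(f)a_l(g)b_{k,l}^{\alpha,\alpha}(\mu)$, a sum of nonnegative terms. Hence $a_\mu(fg)>0$ if and only if there is a pair $(k,l)$ with $a_k(f)a_l(g)>0$, $|k-l|\leq\mu\leq k+l$ and $\mu\equiv k+l\pmod 2$.

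The core step is then the equivalence: $\{m\colon a_m(fg)>0\}$ contains infinitely many even (respectively odd) integers if and only if $\{k+l\colon a_k(f)a_l(g)>0\}$ does. For the ``if'' direction, every pair $(k,l)$ with $a_k(f)a_l(g)>0$ yields $a_{k+l}(fg)>0$ because $b_{k,l}^{\alpha,\alpha}(k+l)>0$; thus $\{k+l\colon a_k(f)a_l(g)>0\}\subseteq\{m\colon a_m(fg)>0\}$ with parities matched term by term. For the ``only if'' direction, if $a_m(fg)>0$ with $m$ even (respectively odd), the pair $(k,l)$ supplied above satisfies $k+l\geq m$ and $k+l\equiv m\pmod 2$, so it contributes an even (respectively odd) element of $\{k+l\colon a_k(f)a_l(g)>0\}$ that is at least $m$; letting $m$ run over an infinite, hence unbounded, set of even (respectively odd) integers produces infinitely many distinct even (respectively odd) values of $k+l$. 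Combining this equivalence with Theorem~\ref{SPD}$(ii)$, which characterizes strict positive definiteness on $S^d$, $d\geq 2$, by $\{m\colon a_m(fg)>0\}$ containing infinitely many even and infinitely many odd integers, completes the proof.

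The step I would treat most carefully, and the only genuine obstacle, is the parity bookkeeping: one must be sure that $b_{k,l}^{\alpha,\alpha}(\mu)$ vanishes unless $\mu$ has the parity of $k+l$, so that the even and the odd parts of $\{m\colon a_m(fg)>0\}$ are governed separately by the even and the odd parts of $\{k+l\colon a_k(f)a_l(g)>0\}$, together with the monotonicity bound $\mu\leq k+l$ that drives the ``only if'' direction. Once these two facts are in place, the argument is a direct transcription of the one already used for the non-spherical case.
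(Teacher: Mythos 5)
Your proof is correct, and it reaches the result by a route that differs from the paper's in its bookkeeping, though it rests on the same ingredients. The paper never proves (or uses) the parity-vanishing of the linearization coefficients: it passes to the Gegenbauer form, splits $fg$ into even and odd parts via $f(t)g(t)\pm f(-t)g(-t)$, rewrites each part as $2\sum a_k(f)a_l(g)C_k^{(d-1)/2}C_l^{(d-1)/2}$ with $k+l$ restricted to one parity class, and applies Lemma~\ref{technical} to each part, so that failure of strict positive definiteness (through Theorem~\ref{SPD}$(ii)$) amounts to one of these two functions being a polynomial, i.e., to one parity class of $\{k+l\colon a_k(f)a_l(g)>0\}$ being finite. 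You instead keep a single expansion, derive that $b_{k,l}^{\alpha,\alpha}(\mu)=0$ unless $\mu\equiv k+l\pmod 2$ (a correct argument via $t\mapsto -t$ and uniqueness of the orthogonal expansion), and compare the support of the coefficients of $fg$ with $\{k+l\colon a_k(f)a_l(g)>0\}$ parity by parity, using exactly the two facts from Lemma~\ref{gasp} that also power Lemma~\ref{technical}: positivity of the top coefficient $b_{k,l}^{\alpha,\alpha}(k+l)$ (giving the inclusion of the sum set into the support of $fg$) and the support bound $\mu\le k+l$ (giving the converse, which is the easy direction of Lemma~\ref{technical}). So your argument is an inline, coefficient-level version of the paper's, trading the symmetrization and Lemma~\ref{technical} for the small parity lemma on linearization coefficients; both are equally rigorous. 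One caveat: your intermediate ``if and only if'' for $a_\mu(fg)>0$ is over-stated, since the ``if'' half would need $b_{k,l}^{\alpha,\alpha}(\mu)>0$ for \emph{every} admissible $\mu$, which Lemma~\ref{gasp} does not assert (it happens to be true for Gegenbauer polynomials by Dougall's linearization formula, but you do not invoke that); since your subsequent reasoning uses only the justified directions (positivity at $\mu=k+l$, and the necessity of $\mu\le k+l$ with $\mu\equiv k+l\pmod 2$), this does not create a gap, but the sentence should be weakened or the Gegenbauer positivity cited.
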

\begin{proof} In the case in which $\alpha=\beta$, the Jacobi polynomials $P_k^{(\alpha,\beta)}$ are positive multiples of the Gegenbauer polynomials $C_k^{\alpha}$:
\begin{gather*}P_k^{(\alpha-1/2,\alpha-1/2)}(t)=\frac{(\alpha+1/2)_k}{(2\alpha)_k} C_k^\alpha(t),\qquad t \in [-1,1], \qquad k=0,1,\ldots.\end{gather*}
So, we can think of the expansions (\ref{PDM}) of $f$ and $g$ in terms of the Gegenbauer polyno\-mials~$C_k^{(d-1)/2}$. In particular, we may write
\begin{gather*}f(t)g(t)=\sum_{k,l=0}^\infty a_k(f)a_l(g)C_{k}^{(d-1)/2}(t)C_{l}^{(d-1)/2}(t),\qquad t\in[-1,1].\end{gather*}
By Theorem~\ref{SPD}$(ii)$, the function $fg$ is the isotropic part of a positive definite kernel on $S^d$ which is not strictly positive definite if, and only if, either $\{k\colon a_{2k}(fg)>0\}$ or $\{k\colon a_{2k+1}(fg)>0\}$ is finite. Since $C _{k}^{(d-1)/2}(-t)= (-1)^{k}C _{k}^{(d-1)/2}(t)$, $t \in [-1,1]$, we have that
\begin{gather*}f(t)g(t) -f(-t)g(-t)= 2\sum_{k=0}^{\infty}a_{2k+1}(fg)C_{2k+1}^{(d-1)/2}(t), \qquad t \in [-1,1],\end{gather*}
and
\begin{gather*}f(t)g(t) +f(-t)g(-t)= 2\sum_{k=0}^{\infty}a_{2k}(fg)C_{2k}^{(d-1)/2}(t),\qquad t \in [-1,1].\end{gather*}
Hence, the previous assertion corresponds to either one of the functions above being a polynomial. However, since we have the alternative representations
\begin{gather*}f(t)g(t) -f(-t)g(-t)= 2\sum_{k+l \in 2\mathbb{Z}_+ +1} a_k(f)a_l(g)C _{k}^{(d-1)/2}(t)C_{l}^{(d-1)/2}(t),\qquad t \in [-1,1],\end{gather*}
and
\begin{gather*}f(t)g(t) +f(-t)g(-t)= 2\sum_{k+l \in 2\mathbb{Z}_+} a_k(f)a_l(g)C _{k}^{(d-1)/2}(t)C_{l}^{(d-1)/2}(t),\qquad t \in [-1,1],\end{gather*}
we conclude from Lemma~\ref{technical}, that $fg$ is the isotropic part of a positive definite kernel on $S^d$ which is not strictly positive definite if, and only if, either $\{k+l \in 2\mathbb{Z}_++1\colon a_k(f)a_l(g)>0\}$ or $\{k+l\in 2\mathbb{Z}_+\colon a_k(f)a_l(g)>0 \}$ is finite. That is, the set
\begin{gather*}\{k+l\colon a_k(f)a_l(g)>0\}\end{gather*} contains either finitely many even or finitely many odd integers.
\end{proof}

\section{Extensions to intersection classes} \label{extend}

The setting here is still aligned with that adopted in the previous section. We will assume the isotropic part of the kernels have either one of the forms
\begin{gather}\label{PDM1}
f(t)=\sum_{k=0}^{\infty}a_k(f) t^k, \qquad t \in [-1,1],
\end{gather}
or
\begin{gather}\label{PDM2}
f(t)=\sum_{k=0}^{\infty}a_k(f) \left(\frac{1+t}{2}\right)^k, \qquad t \in [-1,1],
\end{gather}
where the coefficients $a_k(f)$ are all nonnegative and the series is convergent at $t=1$.

According to Schoenberg, a continuous function $f\colon [-1,1] \to \mathbb{R}$ admits the representation~(\ref{PDM1}) if, and only if, it is the isotropic part of a positive definite kernel on $S^d$, for $d=1,2,\ldots$. On the other hand, a theorem proved in \cite{guella0} shows that a continuous function $f$ has the representation (\ref{PDM2}) if, and only if, each one of the following three equivalent assertions hold:
\begin{enumerate}\itemsep=0pt
\item[--] $f$ is the isotropic part of a positive definite kernel on $P^d(\mathbb{R})$, for $d=2,3,\ldots$;
\item[--] $f$ is the isotropic part of a positive definite kernel on $P^d(\mathbb{C})$, for $d=4,8,\ldots$;
\item[--] $f$ is the isotropic part of a positive definite kernel on $P^d(\mathbb{H})$, for $d=8,12,\ldots$.
\end{enumerate}

For coherence, we will say that $f$ is the isotropic part of a positive definite kernel on $S^\infty$ in the first case and on $P^\infty$ in the second case. In the first case, the notation makes perfect sense if we interpret $S^\infty$ as the unit sphere in the usual real space $\ell_2$ endowed with its usual distance defined by
\begin{gather*}|xy|:=2\arccos x\cdot y,\qquad x,y \in S^\infty,\end{gather*}
where $\cdot$ denotes the standard inner product of $\ell_2$. Indeed, as explained in \cite{bingham, faraut,schoen}, if $f$ is continuous in $[-1,1]$, the kernel $K(x,y)=f(\cos (|xy|/2))$, $x,y \in S^\infty$, is positive definite if, and only if, $f$ admits the representation~(\ref{PDM1}).

Regarding strict positive definiteness for kernels generated by functions in the two classes above, the result detached below holds. The proof of the first assertion can be found in~\cite{mene} while the proof of the other one can be obtained similarly.

\begin{Theorem} \label{SPD2} Let $f$ be the isotropic part of a positive definite kernel $K$ on $S^\infty$ $($respective\-ly,~$P^\infty)$. The kernel $K$ is strictly positive definite if, and only if, the set $\{k\colon a_{k}(f)>0\}$ defined from~\eqref{PDM1} $($respectively, \eqref{PDM2}$)$ contains infinitely many even and infinitely many odd integers $($respectively, infinitely many integers$)$.
\end{Theorem}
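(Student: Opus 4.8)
I will prove the statement for $S^\infty$ and then indicate the (easier) modifications needed for $P^\infty$. The plan is to reduce strict positive definiteness on $S^\infty$ to strict positive definiteness on each finite-dimensional subsphere and then invoke Theorem~\ref{SPD}. Since any finite set of distinct points of $S^\infty$ spans a finite-dimensional subspace, it lies on an isometrically embedded copy of some $S^m$ with $m\ge 1$, and conversely every $S^m$ sits isometrically inside $S^\infty$; hence the kernel $K(x,y)=f(x\cdot y)$ is strictly positive definite on $S^\infty$ precisely when its restriction to $S^m$ is strictly positive definite for every $m\ge 1$. Each such restriction is a positive definite kernel with isotropic part $f$, and I would rewrite \eqref{PDM1} in terms of the Gegenbauer polynomials $C^{(m-1)/2}_j$ attached to $S^m$ via the classical expansion $t^k=\sum_j c^{(m-1)/2}_{k,j}C^{(m-1)/2}_j(t)$, whose coefficients are nonnegative, vanish unless $0\le j\le k$ with $j\equiv k\pmod 2$, and are strictly positive when $j=k$. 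A rearrangement, legitimate because $|t^k|\le 1$ on $[-1,1]$ and $\sum_k a_k<\infty$ so the double series converges absolutely at $t=1$, produces a Fourier--Gegenbauer expansion $f=\sum_j \widetilde a_j\,C^{(m-1)/2}_j$ with all $\widetilde a_j\ge 0$.

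The heart of the argument is to match the monomial support $\{k\colon a_k>0\}$ with the Fourier--Gegenbauer support $\{j\colon \widetilde a_j>0\}$. On one side, $\widetilde a_k\ge c^{(m-1)/2}_{k,k}a_k$ gives $\{j\colon \widetilde a_j>0\}\supseteq\{k\colon a_k>0\}$. On the other side, decompose $f=f_0+f_1$ into its even and odd parts; since $t^k$ has parity $(-1)^k$ and $C^\lambda_j(-t)=(-1)^jC^\lambda_j(t)$, the even-degree portions of both the monomial and the Gegenbauer expansions represent the same function $f_0$, and because monomials (resp.\ Gegenbauer polynomials) have pairwise distinct degrees, $f_0$ is a polynomial if and only if $\{k\text{ even}\colon a_k>0\}$ is finite, if and only if $\{j\text{ even}\colon \widetilde a_j>0\}$ is finite; symmetrically for $f_1$ and the odd indices. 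Hence, for every $m\ge 2$, the set $\{j\colon \widetilde a_j>0\}$ contains infinitely many even and infinitely many odd integers if and only if $\{k\colon a_k>0\}$ does.

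Now for $m\ge 2$, Theorem~\ref{SPD}$(ii)$ says the restriction of $K$ to $S^m$ is strictly positive definite exactly when $\{j\colon \widetilde a_j>0\}$ contains infinitely many even and infinitely many odd integers, which by the previous paragraph is a condition on $\{k\colon a_k>0\}$ independent of $m$. The case $m=1$ is then subsumed: if $\{k\colon a_k>0\}$ meets both parities infinitely often, then, using that all Chebyshev-expansion coefficients of each $t^k$ are strictly positive, for every $j$ one may choose $k\ge j$ of the same parity with $a_k>0$ and deduce that the cosine coefficient of $f$ at index $j$ is positive; thus the cosine support of $f$ is all of $\mathbb{Z}_+$ and meets every full arithmetic progression, so Theorem~\ref{SPD}$(i)$ holds automatically. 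Collecting these facts shows that $K$ is strictly positive definite on $S^\infty$ exactly when $\{k\colon a_k>0\}$ contains infinitely many even and infinitely many odd integers. For $P^\infty$ the same scheme works upon restricting instead to the real projective spaces $P^m(\mathbb{R})$, $m\ge 2$, none of which is a sphere, and replacing Theorem~\ref{SPD}$(ii)$ by Theorem~\ref{SPD}$(iii)$; no parity bookkeeping is needed there, since $((1+t)/2)^k$ is a polynomial of degree exactly $k$, so $f$ fails to be a polynomial if and only if $\{k\colon a_k>0\}$ is infinite.

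The step I expect to be the main obstacle is precisely this support dictionary: transferring the even/odd (resp.\ finiteness) structure of the monomial coefficients, through restriction to $S^m$ (resp.\ $P^m(\mathbb{R})$), to the Fourier--Gegenbauer (resp.\ Fourier--Jacobi) coefficients, since it is what makes the $m$-independent criterion of Theorem~\ref{SPD2} emerge. The reduction to finite dimensions and the check that the $S^1$ constraint is automatically met are comparatively routine.
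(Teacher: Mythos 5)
Your argument is correct, but it is worth noting that the paper itself does not prove Theorem~\ref{SPD2} at all: it simply cites~\cite{mene} for the $S^\infty$ half and remarks that the $P^\infty$ half ``can be obtained similarly''. The classical proof in~\cite{mene} is a direct argument on the Hilbert sphere (essentially: a vanishing quadratic form forces $\sum_i c_i c_j (x_i\cdot x_j)^k=0$ for every $k$ in the support, and letting $k\to\infty$ along even and along odd indices in the support kills all cross terms except those coming from antipodal pairs, whence $c=0$); it needs no finite-dimensional input. You instead reduce to the finite-dimensional Theorem~\ref{SPD} by observing that any finite configuration in $S^\infty$ (resp.\ $P^\infty$) sits isometrically inside some $S^m$ (resp.\ $P^m(\mathbb{R})$), and you transfer the support condition through the monomial-to-Gegenbauer connection coefficients, whose positivity on the diagonal gives $\{j\colon \widetilde a_j>0\}\supseteq\{k\colon a_k>0\}$, while the parity decomposition plus the ``polynomial iff finite support'' equivalence gives the reverse control; the $S^1$ and $P^\infty$ cases then fall out with the Chebyshev expansion and Theorem~\ref{SPD}$(iii)$, respectively. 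This buys a derivation entirely within the toolkit of Section~\ref{s1} (it is the same style of reasoning as Lemma~\ref{technical} with $P_k^{(\alpha,\beta)}$ replaced by $t^k$ or $2^{-k}(1+t)^k$), at the cost of invoking the deeper finite-dimensional characterization and the classical positivity of the connection coefficients; the cited route is more elementary and self-contained but specific to the infinite-dimensional setting. One small point of rigor: in your support dictionary, the implication ``$f_0$ (or $f_1$, or $f$) is a polynomial $\Rightarrow$ only finitely many expansion coefficients are positive'' does not follow merely from the polynomials having pairwise distinct degrees; for the infinite series you need uniqueness of the coefficients, which here is justified either by Taylor-coefficient uniqueness (monomial side) or by orthogonality together with the uniform convergence you already established (Gegenbauer side). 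Since you have that convergence control in hand, this is a one-line repair, not a gap.
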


The reader will easily verify that Lemma \ref{technical} still holds true when we replace $P_k^{(\alpha,\beta)}(t)$ with either~$t^k$ or $2^{-k}(1+t)^k$. Indeed, the proofs are a lot easier in these cases due to the simpler structure of the linearization formulas
\begin{gather*}t^k t^l=t^{k+l}, \qquad t \in [-1,1,],\qquad k,l\in \mathbb{Z}_+,\end{gather*}
and
\begin{gather*}\left(\frac{1+t}{2}\right)^k \left(\frac{1+t}{2}\right)^l=\left(\frac{1+t}{2}\right)^{k+l},\qquad t\in [-1,1],\qquad k,l \in \mathbb{Z}_+.\end{gather*}

Equally simple adaptations in the proofs of Theorems~\ref{notsphere} and~\ref{sphere} justify the following two extensions of the main results proved in Section~\ref{s1}.

\begin{Theorem} \label{sphere1} Let $f$ and $g$ be the isotropic parts of positive definite kernels on $S^\infty$ and consider their series representations according~\eqref{PDM1}. The product~$fg$ is the isotropic part of a strictly positive definite kernel on~$S^\infty$ if, and only if, the set
\begin{gather*}\{k+l\colon a_k(f)a_l(g)>0\}\end{gather*}
contains infinitely many even and infinitely many odd integers.
\end{Theorem}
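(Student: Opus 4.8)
The plan is to follow the pattern of the proof of Theorem~\ref{sphere}, with the simplification that here the linearization formula $t^k t^l = t^{k+l}$ is exact. First I would expand
\[
f(t)g(t) = \sum_{k,l=0}^\infty a_k(f) a_l(g)\, t^k t^l = \sum_{k,l=0}^\infty a_k(f)a_l(g)\, t^{k+l}, \qquad t\in[-1,1].
\]
Since $f$ and $g$ are isotropic parts of positive definite kernels on $S^\infty$, all the coefficients $a_k(f)$, $a_l(g)$ are nonnegative and $\sum_k a_k(f)<\infty$, $\sum_l a_l(g)<\infty$ (convergence at $t=1$); hence the double series converges absolutely at $t=1$ and may be rearranged by grouping the diagonals $k+l=m$. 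This puts $fg$ into the form $f(t)g(t)=\sum_{m=0}^\infty a_m(fg)\, t^m$ with nonnegative coefficients
\[
a_m(fg)=\sum_{k+l=m} a_k(f)a_l(g), \qquad m\geq 0,
\]
so in particular $fg$ again admits a representation as in~\eqref{PDM1} and is itself the isotropic part of a positive definite kernel on $S^\infty$. Moreover, since the displayed sum is finite with nonnegative summands, $a_m(fg)>0$ holds if and only if $m\in\{k+l\colon a_k(f)a_l(g)>0\}$.

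Next I would invoke Theorem~\ref{SPD2}: the positive definite kernel with isotropic part $fg$ is strictly positive definite on $S^\infty$ if and only if the set $\{m\colon a_m(fg)>0\}$ contains infinitely many even and infinitely many odd integers. Substituting the set identity $\{m\colon a_m(fg)>0\}=\{k+l\colon a_k(f)a_l(g)>0\}$ obtained in the previous step yields precisely the claimed characterization, and it handles both implications at once because Theorem~\ref{SPD2} is an equivalence.

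The argument presents no genuine obstacle; the only point that deserves an explicit line is the absolute convergence at $t=1$ that licenses the diagonal rearrangement, and this is immediate from the convergence hypotheses on the series of $f$ and $g$. I would also remark that, because $t^k t^l = t^{k+l}$, neither Lemma~\ref{technical} nor the parity splitting $f(t)g(t)\pm f(-t)g(-t)$ used for $S^d$ in Theorem~\ref{sphere} is strictly needed in this setting; one could nonetheless run that splitting, using $(-t)^m=(-1)^m t^m$ to separate the even and odd parts of $fg$, and reach the same conclusion by applying Theorem~\ref{SPD2} to the two pieces.
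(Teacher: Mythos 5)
Your proof is correct, but it takes a more direct route than the one the paper indicates. The paper dispatches Theorem~\ref{sphere1} by observing that Lemma~\ref{technical} remains valid with $t^k$ in place of $P_k^{(\alpha,\beta)}(t)$ and then adapting the proof of Theorem~\ref{sphere}: there, failure of strict positive definiteness is characterized (via the analogue of Theorem~\ref{SPD}$(ii)$, here Theorem~\ref{SPD2}) as one of the parity components $f(t)g(t)\pm f(-t)g(-t)$ being a polynomial, and the adapted Lemma~\ref{technical} converts ``polynomial'' into finiteness of the even or odd part of $\{k+l\colon a_k(f)a_l(g)>0\}$. You instead exploit that the linearization $t^kt^l=t^{k+l}$ is exact: the diagonal (Cauchy product) rearrangement, licensed by absolute convergence at $t=1$, gives $a_m(fg)=\sum_{k+l=m}a_k(f)a_l(g)$, so the support of the coefficients of $fg$ is literally the sumset and a single application of Theorem~\ref{SPD2} settles both implications. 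This is essentially the $S^1$ argument of Theorem~\ref{pros1} transplanted to $S^\infty$; it buys a shorter, self-contained proof with no need for Lemma~\ref{technical} or the parity splitting, whereas the paper's template buys uniformity, since the same scheme works when the linearization coefficients are only known to be nonnegative (Gegenbauer/Jacobi) and the support of $a_m(fg)$ cannot be read off exactly. One point you rightly make explicit and should keep: before invoking Theorem~\ref{SPD2} one must note that $fg$ again admits a representation of the form~\eqref{PDM1}, with nonnegative, uniquely determined coefficients and convergence at $t=1$.
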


\begin{Theorem} Let $f$ and $g$ be nonzero isotropic parts of positive definite kernels on~$P^\infty$. The following assertions are equivalent:
\begin{enumerate}\itemsep=0pt
\item[$(i)$] $fg$ is the isotropic part of a strictly positive definite kernel on $P^\infty$;
\item[$(ii)$] either $f$ or $g$ is the isotropic part of a strictly positive definite kernel on the space;
\item[$(iii)$] either $f$ or $g$ is not a polynomial.
\end{enumerate}
\end{Theorem}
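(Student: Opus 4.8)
The plan is to reduce everything to the two ingredients already at our disposal: the characterization of strict positive definiteness on $P^\infty$ recorded in Theorem~\ref{SPD2}, and the trivial linearization formula $\left(\frac{1+t}{2}\right)^k\left(\frac{1+t}{2}\right)^l=\left(\frac{1+t}{2}\right)^{k+l}$ together with the $P^\infty$-analogue of Theorem~\ref{notsphere}, which, as remarked in the text, follows from the same argument with $2^{-k}(1+t)^k$ in place of $P_k^{(\alpha,\beta)}(t)$ and the corresponding version of Lemma~\ref{technical}.

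First I would dispose of the equivalence $(ii)\Leftrightarrow(iii)$. By Theorem~\ref{SPD2}, a positive definite kernel on $P^\infty$ with isotropic part $h$ is strictly positive definite precisely when $\{k\colon a_k(h)>0\}$ is infinite, which is just the statement that $h$ is not a polynomial; applying this to $h=f$ and to $h=g$ gives $(ii)\Leftrightarrow(iii)$ immediately.

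Next I would handle $(i)$. Multiplying the two series~\eqref{PDM2} and using $\left(\frac{1+t}{2}\right)^k\left(\frac{1+t}{2}\right)^l=\left(\frac{1+t}{2}\right)^{k+l}$, the product has the expansion $f(t)g(t)=\sum_{m=0}^\infty a_m(fg)\left(\frac{1+t}{2}\right)^m$ with $a_m(fg)=\sum_{k+l=m}a_k(f)a_l(g)$, so that $a_m(fg)>0$ if and only if $m\in\{k+l\colon a_k(f)a_l(g)>0\}$. By the $P^\infty$-version of Theorem~\ref{notsphere} (equivalently, by Theorem~\ref{SPD2} applied to $fg$ combined with the version of Lemma~\ref{technical} for $2^{-k}(1+t)^k$), $fg$ is the isotropic part of a strictly positive definite kernel on $P^\infty$ if and only if $\{k+l\colon a_k(f)a_l(g)>0\}$ is infinite.

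Finally I would observe that, since $f$ and $g$ are nonzero, both $A:=\{k\colon a_k(f)>0\}$ and $B:=\{l\colon a_l(g)>0\}$ are nonempty, and $\{k+l\colon a_k(f)a_l(g)>0\}=A+B$. If $A$ and $B$ are both finite then $A+B$ is finite; conversely, if (say) $A$ is infinite, then fixing any $l_0\in B$ shows $l_0+A\subseteq A+B$ is infinite. Hence $A+B$ is infinite if and only if at least one of $A$, $B$ is infinite, i.e., if and only if $f$ or $g$ fails to be a polynomial. Chaining the three equivalences yields $(i)\Leftrightarrow(iii)\Leftrightarrow(ii)$. There is no genuine obstacle here; the only point to watch is the nonzero hypothesis, which is exactly what guarantees that $A$ and $B$ are nonempty, so that the sumset description of $\{k+l\colon a_k(f)a_l(g)>0\}$ does not degenerate.
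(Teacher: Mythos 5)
Your proof is correct and follows essentially the same route the paper intends: the paper states this theorem as a consequence of ``equally simple adaptations'' of the proofs of Theorems~\ref{notsphere} and~\ref{sphere}, using Theorem~\ref{SPD2} together with the trivial linearization $\left(\frac{1+t}{2}\right)^k\left(\frac{1+t}{2}\right)^l=\left(\frac{1+t}{2}\right)^{k+l}$ and the corresponding version of Lemma~\ref{technical}, which is exactly what you do. Your explicit sumset observation that $A+B$ is infinite precisely when $A$ or $B$ is infinite (both being nonempty by the nonzero hypothesis) is the same closing step the paper leaves implicit.
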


\section[The case of a locally compact group cross $S^d$]{The case of a locally compact group cross $\boldsymbol{S^d}$} \label{produto}

In this section, we expand a little bit the setting considered in the previous sections by analyzing the very same question in a case that includes space-time positive definite kernels. The setting here is the one in \cite{berg,guella}, a brief description of which is as follows.

If $G$ is a locally compact group with operation $*$ and neutral element $e$, we intend to consider positive definite kernels $K\colon G \times S^d \to \mathbb{C}$ that have the form
\begin{gather*}((u,x),(v,y)) \in \big(G \times S^d\big)^2 \to K((u,x),(v,y))=f\big(u^{-1} *v, \cos(|xy|/2)\big) \end{gather*}
in which $f$ is a complex continuous function with domain $G \times [-1,1]$. In this setting, we need to use the definition of positive definiteness in its full strength, that is, the left hand side of~(\ref{ineq}) needs to be of the form $\sum\limits_{i,j=1}^n c_i \overline{c_j} K(y_i,y_j)$, with complex $c_i$ and the $y_i$ in $G \times S^d$. A similar remark holds for strict positive definiteness. In analogy with the previous sections, we will call the function $f\colon G \times [-1,1]\to \mathbb{C}$, the isotropic part of the kernel. According to~\cite{berg}, the positive definiteness of $K$ corresponds to the following series representation for $f$
\begin{gather*}f(u,t)=\sum_{k=0}^{\infty}a_k^d(f;u) C_k^{(d-1)/2}(t), \qquad (u,t) \in G \times [-1,1],\end{gather*}
in which $\{a_k^d(f;\cdot)\}$ is a sequence of continuous functions on $G$ defining positive definite kernels $(u,v)\in G^2 \to a_k^d\big(f;u^{-1}*v\big)$ and $\sum\limits_{k} a_k^d(f;e)C_k^{(d-1)/2}(1)<\infty$. The functions $a_k^d(f;\cdot)$ appearing above have a closed integral form given by
\begin{gather*}a_k^d(f;u)=c(k,d)\int_{-1}^1 f(u,s)C_k^{(d-1)/2}(s) \big(1-s^2\big)^{(d-2)/2}{\rm d}s, \qquad u \in G,\end{gather*}
in which~$c(k,d)$ is an appropriate normalization constant depending upon~$k$ and~$d$.

The following result concerning the strict positive definiteness of a kernel fitting the description presented in the previous paragraph is a~consequence of a quite general result proved in~\cite{guella}. It boils down to strict positive definiteness on~$S^d$ of a large class of positive definite functions indexed over~$G$.

\begin{Theorem}[$d\geq 2$] \label{keyy} Let $f$ be the isotropic part of a positive definite kernel on $G \times S^d$. The following assertions are equivalent:
\begin{enumerate}\itemsep=0pt
\item[$(i)$] the kernel $((u,x),(v,y)) \in \big(G \times S^d\big)^2 \to f\big(u^{-1}*v, \cos(|xy|/2)\big)$ is strictly positive definite;
\item[$(ii)$] if $p$ is a positive integer at most the cardinality of~$G$, $u_1, u_2, \ldots, u_p$ are distinct points in~$G$ and $c$ is a~nonzero vector in $\mathbb{C}^p$, then the function
\begin{gather*} t \in [-1,1] \to \sum_{\mu,\nu=1}^{p}c_{\mu}\overline{c_{\nu}} f\big(u_{\mu}^{-1}*v_{\nu}, t\big), \end{gather*}
is the isotropic part of a strictly positive definite kernel on~$S^{d}$.
\end{enumerate}
\end{Theorem}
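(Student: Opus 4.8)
The plan is to recast assertion $(ii)$ in combinatorial terms and then prove the two implications, the second of which carries the real content.

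Fix distinct $u_1,\dots,u_p\in G$ and $c\in\mathbb{C}^p$. Since each $a_k^d(f;\cdot)$ defines a positive definite kernel on $G$, the numbers $\lambda_k:=\sum_{\mu,\nu=1}^p c_\mu\overline{c_\nu}\,a_k^d(f;u_\mu^{-1}*u_\nu)$ are nonnegative, and the series involved converge absolutely (because $\big|a_k^d(f;u)\big|\leq a_k^d(f;e)$ and $\big|C_k^{(d-1)/2}(t)\big|\leq C_k^{(d-1)/2}(1)$), so that
\begin{gather*}
\sum_{\mu,\nu=1}^p c_\mu\overline{c_\nu}\,f\big(u_\mu^{-1}*u_\nu,t\big)=\sum_{k=0}^\infty \lambda_k\,C_k^{(d-1)/2}(t),\qquad t\in[-1,1].
\end{gather*}
Hence the function in $(ii)$ is the isotropic part of a positive definite kernel on $S^d$ with Gegenbauer coefficients $\lambda_k$, and by Theorem~\ref{SPD}$(ii)$ it is the isotropic part of a strictly positive definite kernel on $S^d$ exactly when $\{k:\lambda_k>0\}$ has infinitely many even and infinitely many odd members. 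So $(ii)$ asserts precisely that this happens for every choice of distinct $u_1,\dots,u_p$ and nonzero $c$.

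For $(i)\Rightarrow(ii)$ I argue by contraposition. If $(ii)$ fails, there are distinct $u_1,\dots,u_p\in G$ and a nonzero $c\in\mathbb{C}^p$ for which the function $F_c$, defined by $F_c(t)=\sum_{\mu,\nu}c_\mu\overline{c_\nu}f(u_\mu^{-1}*u_\nu,t)$, is not the isotropic part of a strictly positive definite kernel on $S^d$; being the isotropic part of a positive definite one, it follows that there are distinct $x_1,\dots,x_q\in S^d$ and a nonzero $b\in\mathbb{C}^q$ with $\sum_{s,r}b_s\overline{b_r}F_c(\cos(|x_sx_r|/2))=0$. On the other hand,
\begin{gather*}
\sum_{s,r=1}^q b_s\overline{b_r}\,F_c\big(\cos(|x_sx_r|/2)\big)=\sum_{\mu,\nu=1}^p\sum_{s,r=1}^q c_\mu\overline{c_\nu}\,b_s\overline{b_r}\,f\big(u_\mu^{-1}*u_\nu,\cos(|x_sx_r|/2)\big),
\end{gather*}
and the right-hand side is the quadratic form of the kernel in $(i)$ built from the nonzero coefficient family $(c_\mu b_s)_{\mu,s}$ at the $pq$ pairwise distinct points $(u_\mu,x_s)\in G\times S^d$. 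Hence the kernel in $(i)$ is not strictly positive definite.

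For $(ii)\Rightarrow(i)$, the substantial direction, suppose the kernel in $(i)$ is not strictly positive definite, witnessed by distinct $(u_1,x_1),\dots,(u_n,x_n)$ and a nonzero $c\in\mathbb{C}^n$. Each degree-$k$ slice $((u,x),(v,y))\mapsto a_k^d(f;u^{-1}*v)\,C_k^{(d-1)/2}(\cos(|xy|/2))$ is a product of positive definite kernels, hence positive definite by the Schur product theorem; since the full quadratic form decomposes as the sum over $k$ of the (nonnegative) quadratic forms of the slices, each of the latter vanishes. Inserting the addition formula, which renders $C_k^{(d-1)/2}(\cos(|xy|/2))$ a positive multiple of $\sum_m Y_{k,m}(x)\overline{Y_{k,m}(y)}$ for spherical harmonics $\{Y_{k,m}\}_m$ of degree $k$, and using positive definiteness of $a_k^d(f;\cdot)$ once more, one obtains $\sum_{i,j}\big(c_iY_{k,m}(x_i)\big)\overline{\big(c_jY_{k,m}(x_j)\big)}\,a_k^d(f;u_i^{-1}*u_j)=0$ for all $k$ and all $m$. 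Grouping the $u_i$ into their distinct values $\tilde u_1,\dots,\tilde u_p$ and setting $A_k:=\big[a_k^d(f;\tilde u_\mu^{-1}*\tilde u_\nu)\big]_{\mu,\nu=1}^p$, this says that explicit vectors $W^{(k,m)}\in\mathbb{C}^p$, built from $c$ and the degree-$k$ spherical harmonics of the $x_i$, all belong to $\ker A_k$. It remains to extract from this a single nonzero $\tilde c\in\mathbb{C}^p$ for which $\big\{k:\sum_{\mu,\nu}\tilde c_\mu\overline{\tilde c_\nu}A_k(\mu,\nu)>0\big\}$ misses infinitely many even (or infinitely many odd) integers; by the first paragraph such a $\tilde c$ contradicts $(ii)$, completing the proof. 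I expect this passage — turning the degeneracies of the sphere factor, which occur degree by degree, into one degeneracy on the group factor that is uniform in the degree — to be the main obstacle. When $p=1$ it is immediate: $\ker A_k$ is $\{0\}$ or $\mathbb{C}$, so $a_k^d(f;e)=0$ for every degree $k$ at which the nonzero finitely supported measure $\sum_i c_i\delta_{x_i}$ has a nonvanishing component, and such a measure has nonvanishing components at infinitely many degrees of at least one parity, so $f(e,\cdot)=\sum_k a_k^d(f;e)C_k^{(d-1)/2}$ violates Theorem~\ref{SPD}$(ii)$ and $\tilde c=(1)$ works. For general $p$ the same mechanism, now exploiting the point-separating properties of the Gegenbauer kernels on $S^d$, is exactly what the general strict positive definiteness criterion for series kernels proved in~\cite{guella} provides; in a full write-up I would appeal to that result rather than reconstruct its proof here.
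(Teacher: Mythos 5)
Your proposal is in the end aligned with the paper, because the paper does not prove Theorem~\ref{keyy} at all: it is quoted as a consequence of the general criterion for strict positive definiteness of series kernels on products established in~\cite{guella}, which is exactly the result you fall back on for the substantive implication $(ii)\Rightarrow(i)$. What you add beyond the paper is correct: the identification of the Gegenbauer coefficients of the compressed function $t\mapsto\sum_{\mu,\nu}c_\mu\overline{c_\nu}f\big(u_\mu^{-1}*u_\nu,t\big)$ as the nonnegative numbers $c^t\big[a_k^d\big(f;u_\mu^{-1}*u_\nu\big)\big]_{\mu,\nu=1}^p\overline{c}$, the reduction of $(ii)$ to the parity condition via Theorem~\ref{SPD}$(ii)$, the contrapositive proof of $(i)\Rightarrow(ii)$ using the coefficient family $(c_\mu b_s)$ at the $pq$ distinct points $(u_\mu,x_s)$, and, in the hard direction, the slicing of a vanishing quadratic form by degree, the use of the addition formula, and the conclusion $A_kW^{(k,m)}=0$ are all sound.

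One caution about the part you sketch but do not complete: the negation of ``$\{k\colon \tilde c^{\,t}A_k\overline{\tilde c}>0\}$ contains infinitely many even and infinitely many odd integers'' is that the set contains only finitely many integers of one of the parities, not that it merely misses infinitely many of one parity. Your $p=1$ illustration commits the same slip: knowing that the nonzero measure $\sum_i c_i\delta_{x_i}$ has nonvanishing harmonic components at infinitely many degrees of some parity only shows that $\{k\colon a_k^d(f;e)>0\}$ omits infinitely many such degrees, which is still compatible with that set containing infinitely many even and infinitely many odd integers, so the contradiction with $(ii)$ does not yet follow. The correct (and easy) fix runs the other way: if $\{k\colon a_k^d(f;e)>0\}$ contained infinitely many even and infinitely many odd integers, Theorem~\ref{SPD}$(ii)$ would make the corresponding spherical kernel strictly positive definite, contradicting the fact that the nonzero measure annihilates every spherical harmonic whose degree lies in that set. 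Since for general $p$ you ultimately invoke the criterion of~\cite{guella} --- precisely what the paper itself does for the whole theorem --- this imprecision does not sink the proposal, but a full write-up should either state the extraction target correctly in this cofinite form or simply cite~\cite{guella} outright, as the paper does.
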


The main result in this section is as follows.

\begin{Theorem}[$d\geq 2$] \label{keyy1} Let $f$ and $g$ be isotropic parts of positive definite kernels on $G \times S^d$. The following assertions are equivalent:
\begin{enumerate}\itemsep=0pt
\item[$(i)$] the product $fg$ is the isotropic part of a strictly positive definite kernel on $G\times S^d$;
\item[$(ii)$] the set
\begin{gather*}
\big\{k+l\colon c^t \big[a_k^d\big(f;u_\mu^{-1}* u_\nu\big)a_l^d\big(g;u_\mu^{-1} * u_\nu \big)\big]_{\mu , \nu =1}^{p} \overline{c}>0\big\}
\end{gather*}
contains infinitely many even and infinitely many odd integers, whenever $p\geq 1$, $u_1, u_2,$ $\ldots,u_p$ are distinct points in~$G$ and $c$ is a nonzero vector in~$\mathbb{C}^p$.
\end{enumerate}
\end{Theorem}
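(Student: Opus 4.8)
The plan is to reduce the statement, by means of Theorem~\ref{keyy}, to a family of instances of the argument underlying Theorem~\ref{sphere}, one instance for each admissible choice of $(p,u_1,\dots,u_p,c)$. First I would observe that $fg$, regarded as a kernel on $G\times S^d$, is the pointwise product of two positive definite kernels, hence positive definite by the Schur product theorem, and has continuous isotropic part; so Theorem~\ref{keyy} is available for $fg$ and tells us that assertion~$(i)$ holds if and only if, for every positive integer $p$ at most the cardinality of $G$, all distinct $u_1,\dots,u_p\in G$ and every nonzero $c\in\mathbb{C}^p$, the function
\[
h_{p,u,c}(t):=\sum_{\mu,\nu=1}^{p}c_\mu\overline{c_\nu}\,f\big(u_\mu^{-1}*u_\nu,t\big)\,g\big(u_\mu^{-1}*u_\nu,t\big),\qquad t\in[-1,1],
\]
is the isotropic part of a strictly positive definite kernel on $S^d$.

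Next I would insert the Gegenbauer expansions of $f$ and $g$ and rearrange. Using the bounds $\big|a_k^d(f;u)\big|\le a_k^d(f;e)$ and $\big|a_l^d(g;u)\big|\le a_l^d(g;e)$, which hold because $u\mapsto a_k^d(f;u)$ and $u\mapsto a_l^d(g;u)$ define positive definite kernels on $G$, one obtains the representation
\[
h_{p,u,c}(t)=\sum_{k,l=0}^{\infty}\gamma_{k,l}\,C_k^{(d-1)/2}(t)\,C_l^{(d-1)/2}(t),\qquad t\in[-1,1],
\]
with $\gamma_{k,l}:=c^{t}\big[a_k^d\big(f;u_\mu^{-1}*u_\nu\big)a_l^d\big(g;u_\mu^{-1}*u_\nu\big)\big]_{\mu,\nu=1}^{p}\,\overline{c}$; the double series converges absolutely at $t=1$ since $\sum_{k,l}|\gamma_{k,l}|\,C_k^{(d-1)/2}(1)C_l^{(d-1)/2}(1)$ is dominated by $(\sum_\mu|c_\mu|)^{2}\big(\sum_k a_k^d(f;e)C_k^{(d-1)/2}(1)\big)\big(\sum_l a_l^d(g;e)C_l^{(d-1)/2}(1)\big)<\infty$. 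The crucial point is that $\gamma_{k,l}\ge 0$: the matrices $\big[a_k^d(f;u_\mu^{-1}*u_\nu)\big]_{\mu,\nu}$ and $\big[a_l^d(g;u_\mu^{-1}*u_\nu)\big]_{\mu,\nu}$ are Hermitian positive semidefinite, so their Hadamard product is too, by the Schur product theorem, and $\gamma_{k,l}$ is precisely that Hadamard product evaluated as a Hermitian form. Consequently $h_{p,u,c}$ is the isotropic part of a positive definite kernel on $S^d$ of exactly the shape handled by Lemma~\ref{technical} and in the proof of Theorem~\ref{sphere}.

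From here the reasoning copies that of Theorem~\ref{sphere}, applied to the fixed nonnegative coefficients $\gamma_{k,l}$. Since $C_m^{(d-1)/2}(-t)=(-1)^m C_m^{(d-1)/2}(t)$, one has
\[
h_{p,u,c}(t)-h_{p,u,c}(-t)=2\sum_{k+l\in 2\mathbb{Z}_++1}\gamma_{k,l}\,C_k^{(d-1)/2}(t)\,C_l^{(d-1)/2}(t),\qquad t\in[-1,1],
\]
\[
h_{p,u,c}(t)+h_{p,u,c}(-t)=2\sum_{k+l\in 2\mathbb{Z}_+}\gamma_{k,l}\,C_k^{(d-1)/2}(t)\,C_l^{(d-1)/2}(t),\qquad t\in[-1,1],
\]
and the left-hand sides are, respectively, the odd and the even parts of the Gegenbauer expansion of $h_{p,u,c}$. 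By Theorem~\ref{SPD}$(ii)$, the function $h_{p,u,c}$ is the isotropic part of a positive definite kernel on $S^d$ that is not strictly positive definite precisely when one of these two parts is a polynomial, which by Lemma~\ref{technical} happens precisely when $\{k+l\in 2\mathbb{Z}_+\colon\gamma_{k,l}>0\}$ or $\{k+l\in 2\mathbb{Z}_++1\colon\gamma_{k,l}>0\}$ is finite, that is, when $\{k+l\colon\gamma_{k,l}>0\}$ contains finitely many even or finitely many odd integers. Negating this and quantifying over all admissible $(p,u_1,\dots,u_p,c)$, then combining with the reduction of the first step, yields the equivalence of assertions~$(i)$ and~$(ii)$.

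I expect the only genuinely delicate ingredient to be the two-fold appeal to the Schur product theorem: once so that Theorem~\ref{keyy} may legitimately be applied to the product kernel, and --- more essentially --- once to secure $\gamma_{k,l}\ge 0$, without which $h_{p,u,c}$ could neither be recognised as an isotropic positive definite kernel on $S^d$ nor handed over to Lemma~\ref{technical}. Everything else is bookkeeping: the uniform estimates that give the convergence, and carrying the quantifier over $(p,u,c)$ through each of the equivalences above.
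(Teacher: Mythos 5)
Your proposal is correct and follows essentially the same route as the paper: reduce via Theorem~\ref{keyy} to the family of functions $h_{p,u,c}$, expand in Gegenbauer polynomials to identify the coefficients $\gamma_{k,l}$ as the Hermitian forms in assertion~$(ii)$, and then run the parity argument of Theorem~\ref{sphere} through Lemma~\ref{technical} and Theorem~\ref{SPD}$(ii)$. You merely make explicit some points the paper leaves implicit (positive definiteness of $fg$, nonnegativity of $\gamma_{k,l}$ via the Schur product theorem, and the absolute convergence justifying the rearrangement), which is sound.
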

\begin{proof} By Theorem~\ref{keyy}, the function $fg$ is the isotropic part of a strictly positive definite kernel on $G\times S^{d}$ if, and only if, for every $p\geq 1$, distinct points $u_1, u_2, \ldots, u_p$ in $G$ and a nonzero vec\-tor~$c$ in~$\mathbb{C}^p$, the function $h\colon [-1,1] \to \mathbb{C} $ defined by
\begin{gather*} h(t)= \sum_{\mu,\nu=1}^{p}c_{\mu}\overline{c_{\nu}} f\big(u_{\mu}^{-1}*v_{\nu}, t\big)g\big(u_{\mu}^{-1}*v_{\nu}, t\big)\end{gather*}
is the isotropic part of a strictly positive definite kernel on~$S^{d}$. Introducing series representations, we can re-write the expression above as
\begin{gather*}h(t)=\!\sum_{k,l=0}^{\infty}\! \left[ \sum_{\mu,\nu=1}^{p} c_{\mu}\overline{c_{\nu}} a_{k}^d\big(f,u_{\mu}^{-1}*v_{\nu}\big) a_{l}^d\big(g,u_{\mu}^{-1}*v_{\nu} \big) \right]\! C_{k}^{(d-1)/2}(t)C_{l}^{(d-1)/2}(t),\!\!\!\qquad t \in [-1,1].\end{gather*}
In other words,
\begin{gather*}h(t)=\!\sum_{k,l=0}^{\infty}\! \big\{c^t \big[a_k^d\big(f;u_\mu^{-1}* u_\nu\big)a_l^d\big(g;u_\mu^{-1} * u_\nu \big)\big]_{\mu , \nu =1}^{p} \overline{c}\big\} C_{k}^{(d-1)/2}(t)C_{l}^{(d-1)/2}(t), \!\!\!\!\qquad t \in [-1,1], \end{gather*}
in which $c$ is the vector with entries $c_1, c_2, \ldots, c_p$. We now can repeat the arguments used in Section~\ref{extend} in order to see that~$h$ is the isotropic part of a strictly positive definite kernel on~$S^{d}$ if, and only if,
\begin{gather*}\big\{k+l\colon c^t \big[a_k^d\big(f;u_\mu^{-1}* u_\nu\big)a_l^d\big(g;u_\mu^{-1} * u_\nu \big)\big]_{\mu , \nu =1}^{p} \overline{c}>0\big\}\end{gather*}
contains infinitely many even and infinitely many odd integers.
\end{proof}

The previous theorem can be put in a more general form, following the setting adopted in~\cite{guella}. Details will be not included here.

The strict positive definiteness of product covariance functions on $\mathbb{R}^d$ alone was considered in \cite{posa,iaco,podeiaco}, where the reader can also find explanations regarding the practicability and the computational advantages and simplifications implied by the use of such separable covariance functions in the geostatistical literature. However, a self-contained characterization for the strict positive definiteness of a product of positive definite kernels on $\mathbb{R}^d$ is still elusive.

We close the section presenting an explicit construction in the case in which $G$ is the usual group $(\mathbb{R},+)$.

\begin{Example}
Let $f$ and $g$ be the isotropic parts of strictly positive definite kernels on~$S^{\infty}$. If~$\lambda$ and~$\theta$ are real numbers, then the functions $F$ and $G$ given by
\begin{gather*}
F(u,t)=f(t \cos \lambda u )=\sum_{k=0}^\infty \big[a_k(f)\cos^k \lambda u\big] t^k, \qquad (t,u) \in [-1,1] \times \mathbb{R},
\end{gather*}
and
\begin{gather*}
G(u,t)=g(t \cos \theta u)=\sum_{k=0}^\infty \big[a_l(g)\cos^l \theta u\big] t^l , \qquad (t, u) \in [-1,1] \times \mathbb{R},
\end{gather*}
are isotropic parts of positive definite kernels on $\mathbb{R} \times S^{\infty}$ and these kernels are not strictly positive definite. However, if~$\lambda$ and~$\theta$ are nonzero and $\lambda/\theta$ is not a~rational number, then~$FG$ is the isotropic part of a strictly positive definite kernel on~$\mathbb{R} \times S^{\infty}$. Indeed, if $p\geq 1$, $u_1, u_2$, $\ldots, u_p$ are distinct points in~$\mathbb{R}$ and~$c$ is a nonzero vector in~$\mathbb{C}^p$, then
\begin{gather*}
\lim_{\min (k,l) \to \infty} \cos(\lambda (x_{\mu} - x_{\nu}))^{k} \cos(\theta (x_{\mu} - x_{\nu}))^{l} = \delta_{\mu\nu}.
\end{gather*}
In particular, the set
\begin{gather*}
\big\{k+l\colon c^t \big[a_k(f)\cos(\lambda (x_{\mu} - x_{\nu}))^{k} a_l(g) \cos(\theta (x_{\mu} - x_{\nu}))^{l} \big]_{\mu , \nu =1}^{p} \overline{c}>0\big\}
\end{gather*}
contains infinitely many even and infinitely many odd integers, as required by Theorem \ref{keyy1}$(ii)$.
\end{Example}

\section{The complex spherical case} \label{complex}

In this section, we revisit Section~\ref{s1} and the complex counterpart of the spherical case.

We write $\Omega_{2q}$ to denote the unit sphere in $\mathbb{C}^q$ and $\cdot$ to denote the usual inner product of $\mathbb{C}$. The positive definite kernels to be considered here are of the form
\begin{gather*}K(x,y)=f(x \cdot y), \qquad x,y \in \Omega_{2q},\end{gather*}
in which $f$ is a complex continuous function on $\mathbb{D}:=\{z\in \mathbb{C}\colon |z|\leq 1\}$ if $q\geq 2$ and on $\Omega_2$ if $q=1$. Once again, we observe that the scalars in the definition of positive definiteness according to~(\ref{ineq}) needs to be complex ones. A positive definite kernel on~$\Omega_{2q}$ is invariant with respect to unitary transformations of $\mathbb{C}^q$, the reason why we will call~$f$ the isotropic part of~$K$.

According to~\cite{menepe}, in the case $q\geq 2$, a kernel $K$ as above is positive definite if, and only if, its isotropic part $f$ is representable in the form
\begin{gather}\label{repC}
f(z)= \sum_{m,n=0}^\infty a_{m,n}(f) R_{m,n}^{q-2}(z), \qquad z \in \mathbb{D},
\end{gather}
in which all the coefficients $a_{m,n}$ are nonnegative, $R_{m,n}^{q-2}$ is the disk (or Zernike) polynomial of bi-degree $(m,n)$ associated with the integer $q-2$ and the series is convergent at $z=1$. The disk polynomials are discussed in~\cite{wunsche}. In the case $q=1$, the representation for $f$ becomes
\begin{gather*}
f(z)=\sum_{m\in \mathbb{Z}} a_m(f) z^m, \qquad z \in \Omega_2,
\end{gather*}
in which all the coefficients $a_m(f)$ are nonnegative and the series is convergent at $z=1$. The strict positive definiteness of the kernel in each case is equivalent to $\{m-n\colon a_{m,n}(f)>0\}$ (respectively, $\{m\colon a_m(f)>0\}$) intersecting every full arithmetic progression of $\mathbb{Z}$. Details on that can be found in~\cite{guella2,menega}.

If $f$ and $g$ are the isotropic parts of two positive definite kernels on $\Omega_2$, a procedure very close to that used at the beginning in Section~\ref{extend} leads to the following criterion: $fg$ is the isotropic part of a strictly positive definite kernel on $\Omega_{2}$ if, and only if, $\{m+n\colon a_m(f)a_n(g)>0\}$ intersects every full arithmetic progression of $\mathbb{Z}$. The details will be not included.

In order to handle the case $q\geq2$, it is relevant to recall a linearization formula for disk polynomials proved by Koornwinder~\cite{koorn}, a generalization of the one described in Lemma~\ref{gasp}: for nonnegative integers $m_1$, $m_2$, $n_1$, $n_2$, it reads
\begin{gather} \label{lindisc}
R_{m_{1},n_{1}}^{q-2}(z)R_{m_{2},n_{2}}^{q-2}(z)= \sum_{m, n} a_{m_{1},n_{1};m_{2},n_{2}}^{q;m,n}R_{m,n}^{q-2}(z),\qquad z\in \mathbb{D},
\end{gather}
in which all the coefficients $a_{m_{1},n_{1};m_{2},n_{2}}^{q;m,n}$ are nonnegative. The sum takes into account just the pairs $(m,n)$ satisfying
\begin{gather*}m_1+m_2 +n= n_1 + n_2 +m, \qquad |m_1+n_1-m_2-n_2|\leq m+n\leq m_1+n_1+m_2+n_2.\end{gather*}
Its structure is not as good as the other ones we have used so far in this paper, once the disk polynomials are double-indexed functions. However, a counterpart of Lemma~\ref{technical} can be enunciated and proved as follows.

\begin{Lemma}\label{koornbas}
Let $f$ be a function as in \eqref{repC}. For each $k \in \mathbb{Z}$, define
\begin{gather*}f_k(z)=\sum_{m-n=k}a_{m,n}(f) R_{m,n}^{q-2}(z), \qquad z \in \mathbb{D}.
\end{gather*}
The following assertions are equivalent:
\begin{enumerate}\itemsep=0pt
\item[$(i)$] the function $f$ is the isotropic part of a strictly positive definite kernel on $\Omega_{2q}$;
\item[$(ii)$] the set $\{k\colon f_k\not \equiv 0\}$ intersects every full arithmetic progression in $\mathbb{Z}$.
\end{enumerate}
\end{Lemma}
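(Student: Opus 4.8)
\emph{Proof strategy.} The plan is to reduce the lemma to the characterization of strict positive definiteness on $\Omega_{2q}$ recalled just above its statement, namely that $f$ is the isotropic part of a strictly positive definite kernel on $\Omega_{2q}$ if, and only if, $\{m-n\colon a_{m,n}(f)>0\}$ intersects every full arithmetic progression in $\mathbb{Z}$ (references \cite{guella2,menega}). Granting that, it suffices to establish the set equality
\begin{gather*}\{k\colon f_k\not\equiv 0\}=\{m-n\colon a_{m,n}(f)>0\},\end{gather*}
after which the equivalence $(i)\Leftrightarrow(ii)$ is immediate.

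First I would check that each $f_k$ is a well-defined continuous function on $\mathbb{D}$ and that $f=\sum_{k\in\mathbb{Z}}f_k$. The disk polynomials satisfy $\bigl|R_{m,n}^{q-2}(z)\bigr|\leq R_{m,n}^{q-2}(1)=1$ on $\mathbb{D}$ (this follows, for instance, from the positive definiteness of the kernel $(x,y)\mapsto R_{m,n}^{q-2}(x\cdot y)$ on $\Omega_{2q}$, since for $q\geq 2$ every point of $\mathbb{D}$ is of the form $x\cdot y$ with $x,y\in\Omega_{2q}$). Combined with the standing hypothesis $\sum_{m,n}a_{m,n}(f)R_{m,n}^{q-2}(1)<\infty$ coming from \eqref{repC}, this shows that the defining series of $f_k$ converges absolutely and uniformly on $\mathbb{D}$ and that the double series for $f$ may be regrouped according to the value of $m-n$. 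It is worth recording, for the product theorem that follows, that since $R_{m,n}^{q-2}(e^{i\phi}z)=e^{i(m-n)\phi}R_{m,n}^{q-2}(z)$, the function $f_k$ is exactly the $k$-th Fourier coefficient of $\phi\mapsto f(e^{i\phi}z)$; that is, the $f_k$ are the isotypic components of $f$ under the circle action $z\mapsto e^{i\phi}z$ on $\Omega_{2q}$.

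Next I would prove the displayed set equality. For the inclusion $\supseteq$, suppose $a_{m_0,n_0}(f)>0$ with $m_0-n_0=k$; evaluating at $z=1$ gives $f_k(1)=\sum_{m-n=k}a_{m,n}(f)R_{m,n}^{q-2}(1)\geq a_{m_0,n_0}(f)R_{m_0,n_0}^{q-2}(1)>0$, because all summands are nonnegative and $R_{m,n}^{q-2}(1)>0$, so $f_k\not\equiv 0$. The reverse inclusion is the contrapositive triviality: if $a_{m,n}(f)=0$ for every $(m,n)$ with $m-n=k$, then $f_k$ is identically zero. Combining this equality with the cited characterization completes the argument.

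I do not expect a genuinely hard step here once the result of \cite{guella2,menega} is taken as known; the only points requiring a little care are the uniform control of the disk polynomials on $\mathbb{D}$ needed to make each $f_k$ a bona fide continuous function, and the observation that the nonnegativity of the coefficients $a_{m,n}(f)$ rules out cancellation inside $f_k$ — which is precisely why evaluation at $z=1$ settles the matter, and no appeal to linear independence of disk polynomials of a fixed bi-degree difference is needed.
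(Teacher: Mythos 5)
Your proposal is correct and follows essentially the same route as the paper: both reduce the lemma to the characterization of strict positive definiteness via the set $\{m-n\colon a_{m,n}(f)>0\}$ from \cite{guella2,menega}, use the bound $\bigl|R_{m,n}^{q-2}(z)\bigr|\leq R_{m,n}^{q-2}(1)$ to regroup the series as $f=\sum_{k}f_k$, and conclude from the set equality $\{k\colon f_k\not\equiv 0\}=\{m-n\colon a_{m,n}(f)>0\}$. Your evaluation at $z=1$ merely makes explicit the nonnegativity argument the paper leaves implicit.
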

\begin{proof} Since the series expansion for $f$ is convergent at $z=1$, and $\big|R_{m,n}^{q-2}(z)\big| \leq R_{m,n}^{q-2}(1)$, we can write the equality
\begin{gather*}f(z)=\sum_{k\in \mathbb{Z}} f_k(z), \qquad z\in \mathbb{D}.\end{gather*}
Obviously, each $f_k$ is continuous in $\mathbb{D}$ and, in addition, it is the isotropic part of a positive definite kernel on~$\Omega_{2q}$. Since
\begin{gather*}\{m-n\colon a_{m,n}(f)>0\}=\{k\colon f_k\not \equiv 0\},\end{gather*}
the assertion in the statement of the lemma follows.
\end{proof}

We can now state and prove the main result in this section.

\begin{Theorem}[$q\geq 2$] Let $f$ and $g$ be the isotropic parts of two positive definite kernels on~$\Omega_{2q}$ and consider their series representation according to~\eqref{repC}. Then, $fg$ is the isotropic part of a~strictly positive definite kernel on~$\Omega_{2q}$ if, and only if, the set
\begin{gather*} \{(m-n) + (m'-n')\colon a_{m,n}(f)a_{m',n'}(g)>0 \} \end{gather*}
intersects every full arithmetic progression in $\mathbb{Z}$.
\end{Theorem}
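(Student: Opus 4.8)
### Proof Strategy

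The plan is to follow the same template used for the homogeneous-space results in Section~\ref{s1} and the extension in Section~\ref{complex}, but now combining the Koornwinder linearization formula~\eqref{lindisc} with Lemma~\ref{koornbas} in place of the pair (Lemma~\ref{gasp}, Lemma~\ref{technical}). First I would expand the product in disk polynomials, writing
\begin{gather*}
f(z)g(z)=\sum_{m,n,m',n'=0}^\infty a_{m,n}(f)a_{m',n'}(g)\,R_{m,n}^{q-2}(z)R_{m',n'}^{q-2}(z),\qquad z\in\mathbb{D},
\end{gather*}
which is legitimate because both series converge absolutely at $z=1$ and $\big|R_{m,n}^{q-2}(z)\big|\le R_{m,n}^{q-2}(1)$. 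Since $fg$ is again the isotropic part of a positive definite kernel on $\Omega_{2q}$ (the class is a semigroup under multiplication, by nonnegativity of the coefficients in~\eqref{lindisc}), it has a representation as in~\eqref{repC}, say $fg=\sum_{p,p'} a_{p,p'}(fg)R_{p,p'}^{q-2}$.

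The key observation, exactly as in the proof of Lemma~\ref{koornbas}, is that strict positive definiteness of $fg$ is governed by the set $\{k\colon (fg)_k\not\equiv 0\}$, where $(fg)_k=\sum_{p-p'=k}a_{p,p'}(fg)R_{p,p'}^{q-2}$. So the whole proof reduces to identifying this index set in terms of the coefficients of $f$ and $g$. Here is where the constraint $m_1+m_2+n=n_1+n_2+m$ in Koornwinder's formula does the essential work: it forces $m-n=(m_1+m_2)-(n_1+n_2)=(m_1-n_1)+(m_2-n_2)$, i.e.\ the ``$m-n$ degree'' is additive under the product. Consequently every disk polynomial $R_{p,p'}^{q-2}$ appearing (with a positive coefficient) in the expansion of $R_{m,n}^{q-2}R_{m',n'}^{q-2}$ satisfies $p-p'=(m-n)+(m'-n')$, and moreover the extreme term with $p-p'=(m-n)+(m'-n')$ genuinely occurs with a positive coefficient (this is the analogue of the statement ``$b_{k,l}^{\alpha,\beta}(k+l)$ is positive'' in Lemma~\ref{gasp}; one can take $p=m+m'$, $p'=n+n'$). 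Grouping the double series by the value of $p-p'$ therefore yields
\begin{gather*}
(fg)_k=\sum_{(m-n)+(m'-n')=k}a_{m,n}(f)a_{m',n'}(g)\,R_{m,n}^{q-2}R_{m',n'}^{q-2},
\end{gather*}
and since each summand is a nonnegative combination of disk polynomials, no cancellation can occur; hence $(fg)_k\not\equiv 0$ if and only if $k\in\{(m-n)+(m'-n')\colon a_{m,n}(f)a_{m',n'}(g)>0\}$.

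Combining the two previous steps: by Lemma~\ref{koornbas} applied to $fg$, the kernel generated by $fg$ is strictly positive definite if and only if $\{k\colon (fg)_k\not\equiv 0\}$ intersects every full arithmetic progression in $\mathbb{Z}$, and by the identification just made this set equals $\{(m-n)+(m'-n')\colon a_{m,n}(f)a_{m',n'}(g)>0\}$, which completes the proof. The main obstacle I anticipate is the careful bookkeeping needed to justify that rearranging the quadruple sum into the single-index families $(fg)_k$ is valid and that the extreme coefficient in~\eqref{lindisc} is strictly positive; the convergence part is routine from the $z=1$ bound, but one must be a little careful that the reindexing respects the additivity constraint and that ``$(fg)_k\not\equiv 0$'' really is equivalent to the coefficient set being nonempty rather than merely implied by it — this is exactly where nonnegativity of all coefficients, hence the absence of cancellation, is indispensable.
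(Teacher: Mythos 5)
Your proposal is correct and takes essentially the same route as the paper: expand the product, use the constraint $m_1+m_2+n=n_1+n_2+m$ in Koornwinder's formula \eqref{lindisc} to see that the quantity $m-n$ is additive under multiplication, group the series accordingly to identify $(fg)_k$, and invoke Lemma~\ref{koornbas} applied to $fg$. The only cosmetic difference is that the paper does not need your (true, but here unproved) claim that the extreme coefficient with $(p,p')=(m+m',n+n')$ is positive: since all coefficients are nonnegative and every disk polynomial equals $1$ at $z=1$, evaluating $(fg)_k$ at $z=1$ already shows it is not identically zero whenever some $a_{m,n}(f)a_{m',n'}(g)>0$ contributes.
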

\begin{proof}The first step in the proof is to write
\begin{gather*}f(z)g(z) =\sum_{k,l \in \mathbb{Z}} f_{k}(z)g_{l}(z)= \sum _{\mu \in \mathbb{Z} }\sum_{k+l=\mu} f_{k}(z)g_{l}(z),\qquad z \in \mathbb{D}.\end{gather*}
The second equality above is supported by the inequality
\begin{gather*}
\sum_{k,l \in \mathbb{Z}} |f_{k}(z)g_{l}(z)|\leq \sum_{k,l \in \mathbb{Z}} f_{k}(1)g_{l}(1)=f(1)g(1).
\end{gather*}
Since the indices in (\ref{lindisc}) satisfy
\begin{gather*}m-n= (m_{1}-n_{1} ) + (m_{2}-n_{2}),\end{gather*}
for each pair $((m_1,n_1),(m_2,n_2))$, we can write
\begin{gather*}
f_{k}(z)g_{l}(z)=\sum_{m-n=k+l}b^{k,l}_{m,n}R_{m,n}^{q-2}(z),\qquad z\in \mathbb{D},\end{gather*}
where all the coefficients $b^{k,l}_{m,n}$ are nonnegative. Consequently,
\begin{gather*}\sum_{k+l=\mu}f_k(z)g_l(z)=\sum_{m-n=\mu}b_{m,n}R_{m,n}^{q-2}(z),\qquad z\in \mathbb{D},\end{gather*}
and, in particular,
\begin{gather*}(fg)_\mu(z)=\sum_{k+l=\mu}f_k(z)g_l(z),\qquad z \in \mathbb{D}.\end{gather*}
Thus, by Lemma~\ref{koornbas}, it remains to show that
\begin{gather*}\{\mu\colon (fg)_\mu \not \equiv 0 \}= \{ m-n+m'-n'\colon a_{m,n}(f)a_{m',n'}(g)>0\}.\end{gather*}
If $(fg)_\mu$ is not the zero function for some $\mu$, then there exists a pair $(k,l)$ with $k+l=\mu$ so that neither $f_k$ nor $g_l$ is the zero function. Hence, there are pairs $(m,n)$ with $m-n=k$ and $(m',n')$ with $m'-n'=l$ so that $a_{m,n}(f)>0$ and $a_{m',n'}(g)>0$. In other words, $\mu \in \{m-n+m'-n'\colon a_{m,n}(f)a_{m',n'}(g)>0\}$. Conversely, if $\mu \in \{m-n+m'-n'\colon a_{m,n}(f)a_{m',n'}(g)>0\}$, then $\mu=m-n+m'-n'$ for pairs $(m,n)$ and $(m',n')$ for which $a_{m,n}(f)a_{m',n'}(g)>0$. Defining, $k=m-n$ and $l=m'-n'$, it follows that
both $f_k$ and $g_l$ are not identically 0. In particular, $f_k(1)>0$ and $g_l(1)>0$, whence $(fg)_\mu(1)=(fg)_{m-n+m'-n'}(1)>0$. Thus, $(fg)_\mu$ is not identically zero.
\end{proof}

\subsection*{Acknowledgements}

The first and third authors acknowledge partial financial support from FAPESP, grants $\#$ 2014/14380-2 and 2016/09906-0. Likewise, the second one acknowledges partial financial support from PROEX-CAPES. The authors would like to express their gratitude to the anonymous referees for their valuable suggestions on the original manuscript.

\pdfbookmark[1]{References}{ref}
\LastPageEnding

\end{document}